\documentclass{amsart}
\usepackage{mathrsfs,latexsym,amsfonts,amssymb}
\usepackage{hyperref}

\newtheorem{theorem}{Theorem}[section]
\newtheorem{lemma}[theorem]{Lemma}
\newtheorem{corollary}[theorem]{Corollary}
\newtheorem{question}[theorem]{Question}
\theoremstyle{definition}
\newtheorem{definition}[theorem]{Definition}

\theoremstyle{remark}

\newtheorem{example}[theorem]{Example}

\begin{document}
\title[Uniform covers at non-isolated points]%
{Uniform covers at non-isolated points}

\author{Fucai Lin}
\address{Fucai Lin: Department of Mathematics,
Zhangzhou Normal University, Zhangzhou 363000, P. R. China}
\email{lfc19791001@163.com}
\author{Shou Lin}
\address{Shou Lin (the corresponding author): Department of Mathematics,
Zhangzhou Normal University, Zhangzhou 363000, P. R. China;
Institute of Mathematics, Ningde Teachers' College, Ningde, Fujian
352100, P. R. China}
\email{linshou@public.ndptt.fj.cn}

\thanks{Supported in part by the NSFC(No. 10571151).}

\keywords{Boundary-compact maps; developable spaces; uniform bases;
sharp bases; open maps; pseudo-open maps.}
\subjclass[2000]{54C10; 54D70; 54E30; 54E40}

\begin{abstract}
In this paper,\ the authors define a space with an uniform base
at non-isolated points, give some characterizations of images of
metric spaces by boundary-compact maps, and study certain
relationship among spaces with special base properties.\ The main
results are the following: (1)\ $X$ is an open,\ boundary-compact
image of a metric space if and only if $X$ has an uniform base at
non-isolated points; (2)\ Each discretizable space of a space with
an uniform base is an open compact and at most boundary-one image of
a space with an uniform base; (3)\ $X$ has a point-countable base if
and only if $X$ is a bi-quotient,\ at most boundary-one and
countable-to-one image of a metric space.
\end{abstract}

\maketitle

\section{Introduction}
Topologists obtained many interesting characterizations of the
images of metric spaces by some kind of maps.\ A. V.
Arhangel'ski\v{\i} \cite{Ar1} proved that a space $X$ is an open
compact image of a metric space if and only if $X$ has an uniform
base.\ Recently, C. Liu \cite{LC} gives a new characterization of
spaces with a point-countable base by pseudo-open and at most
boundary-one images of metric spaces. How to character an open or
pseudo-open and boundary-compact images of metric spaces?\ On the
other hand, a study of spaces with a sharp base or a weakly uniform
base \cite{AJRS, Au} shows that some properties of a non-isolated
point set of a topological space will help us discuss a whole
construction of a space. In this paper,\ the authors analyze some
base properties on non-isolated points of a space, introduce a space
having an uniform base at non-isolated points and describe it as an
image of a metric space by open boundary-compact maps.\ Some
relationship among the images of metric spaces under open
boundary-compact maps, pseudo-open boundary-compact maps, open
compact maps, and spaces with a point-countable base are discussed.

By $\mathbb{R, N}$, denote the set of real numbers and positive
integers, respectively. For a space $X$, let $$I(X)=\{x:x \mbox{ is
an isolated point of } X\}$$ and $$\mathcal I(X)=\{\{x\}:x\in
I(X)\}.$$

In this paper all spaces are $T_{2}$, all maps are continuous and
onto. Recalled some basic definitions.

Let $X$ be a topological space. $X$ is called a {\it metacompact}
(resp. {\it paracompact}, {\it meta-Lindel\"{o}f}) space if every
open cover of $X$ has a point-finite (resp. locally finite,
point-countable) open refinement.\ $X$ is said to have a
$G_{\delta}$-{\it diagonal} if the diagonal $\Delta =\{(x,x):x\in
X\}$ is a $G_\delta$-set in $X\times X$. $X$ is called a {\it
perfect space} if every open subset of $X$ is an $F_{\sigma}$-set in
$X$.

\begin{definition}
Let $\mathcal{P}$ be a base of a space $X$.
\begin{enumerate}
\item $\mathcal{P}$ is an {\it uniform base}~\cite{AP}~(resp.
{\it uniform base at non-isolated points}) for $X$ if for each
($resp.$\ non-isolated) point $x\in X$ and $\mathcal{P}^{\prime}$ is
a countably infinite subset of $(\mathcal{P})_{x}$,
$\mathcal{P}^{\prime}$ is a neighborhood base at $x$.

\item $\mathcal{P}$ is a {\it point-regular base}~\cite{AP}~(resp.
{\it point-regular base at non-isolated points}) for $X$ if for each
($resp.$\ non-isolated) point $x\in X$ and $x\in U$ with $U$ open in
$X$, $\{P\in (\mathcal{P})_{x}:P\not\subset U\}$ is finite.
\end{enumerate}
\end{definition}

In the definition, ``at non-isolated points'' means ``at each
non-isolated point of $X$''. It is obvious that uniform bases~(resp.
point-regular bases)$\Rightarrow$ uniform bases at non-isolated
points (resp. point-regular bases at non-isolated points),\ but we
will see that uniform bases at non-isolated points~(resp.
point-regular bases at non-isolated points) $\not\Rightarrow$
uniform bases~(resp. point-regular bases) by Example 4.1.

\begin{definition}
Let $X$ be a space, and $\{\mathcal{P}_{n}\}$ a sequence of open
subsets of $X$.
\begin{enumerate}
\item $\{\mathcal{P}_{n}\}$ is called a {\it quasi-development}~\cite{Be} for $X$
if for every $x\in U$ with $U$ open in $X$, there exists $n\in
\mathbb{N}$ such that $x\in\mbox{st}(x,\mathcal{P}_{n})\subset U$.

\item $\{\mathcal{P}_{n}\}$ is called a {\it
development}~(resp. {\it development at non-isolated points}) for
$X$ if $\{\mbox{st}(x,\mathcal{P}_{n})\}_{n\in\mathbb N}$ is a
neighborhood base at $x$ in $X$ for each~(resp. non-isolated) point
$x\in X$.

\item $X$ is called {\it quasi-developable}~(resp. {\it developable},
{\it developable at non-isolated points}) if $X$ has a
quasi-development (resp. development, development at non-isolated
points).
\end{enumerate}
\end{definition}

It is obvious that every development for a space is a development at
non-isolated points,\ but a space having a development at
non-isolated points may not have a development, see Example 4.2.

\begin{definition}
Let $f:X\rightarrow Y$ be a map.
\begin{enumerate}
\item $f$ is a {\it compact map}~(resp. {\it s-map}) if each
$f^{-1}(y)$ is compact~(resp. separable) in $X$;

\item $f$ is a {\it boundary-compact map}~(resp. {\it
boundary-finite map}, {\it at most boundary-one map}) if each
$\partial f^{-1}(y)$ is compact~(resp. finite, at most one point) in
$X$;

\item $f$ is an {\it open map} if whenever
$U$ open in $X$, then $f(U)$ is open in $Y$;

\item $f$ is a {\it bi-quotient map}~(resp. {\it countably bi-quotient map})
if for any $y\in Y$ and any (resp. countable) family $\mathcal{U}$
of open subsets in $X$ with $f^{-1}(y)\subset \cup \mathcal{U}$,\
there exists finite subset $\mathcal{U}^{\prime}\subset \mathcal{U}$
such that $y\in \mbox{Int}f(\cup \mathcal{U}^{\prime})$;

\item $f$ is a {\it pseudo-open map} if whenever
$f^{-1}(y)\subset U$ with $U$ open in $X$, then
$y\in\mbox{Int}(f(U))$.
\end{enumerate}
\end{definition}

It is easy to see that open maps~$\Rightarrow$ bi-quotient
maps~$\Rightarrow$ countably bi-quotient maps~$\Rightarrow$
pseudo-open maps~$\Rightarrow$ quotient maps.

\begin{definition}
Let $X$ be a space.
\begin{enumerate}
\item A collection $\mathcal{U}$ of subsets of $X$ is said to be $Q$ (i.e., {\it
interior-preserving}) if $\mbox{Int}(\cap \mathcal{W})=\cap
\{\mbox{Int}W:W\in \mathcal{W}\}$ for every $\mathcal{W}\subset
\mathcal{U}$.

\item An {\it ortho-base}~\cite{LN} $\mathcal{B}$ for $X$ is a base of $X$ such
that either $\cap \mathcal{A}$ is open in $X$ or $\cap
\mathcal{A}=\{x\}\notin\mathcal I(X)$ and $\mathcal{A}$ is a
neighborhood base at $x$ in $X$ for each $\mathcal{A}\subset
\mathcal{B}$. A space $X$ is a {\it proto-metrizable space}
\cite{GZ} if it is a paracompact space with an ortho-base.

\item A {\it sharp base} \cite{AAC}~$\mathcal{B}$ of $X$ is a base of $X$
such that, for every injective sequence $\{B_{n}\}\subset
\mathcal{B}$, if $x\in \bigcap_{n\in\mathbb N}B_{n}$, then
$\{\bigcap_{i\leq n}B_{i}\}_{n\in\mathbb N}$ is a neighborhood base
at $x$.

\item A base $\mathcal{B}$ of $X$ is said to be $BCO$~(i.e., bases of countable orders)
if, for any $x\in X$, $\{B_{i}\}\subset \mathcal{B}$ is a strictly
decreasing sequence, then $\{B_{i}\}_{i\in\mathbb N}$ is a
neighborhood base at $x$.
\end{enumerate}
\end{definition}

It is well known that \cite{AAC, AJRS, Au}
\begin{enumerate}
\item Uniform bases~$\Rightarrow\sigma$-point-finite
bases~$\Rightarrow\sigma$-Q bases;

\item Uniform bases~$\Rightarrow$ sharp bases, developable
spaces~$\Rightarrow$ BCO, $G_{\delta}$-diagonals;

\item Sharp bases~$\Rightarrow$ point-countable bases.
\end{enumerate}

Readers may refer to \cite{En, Ls} for unstated definitions and
terminology.

\vskip 0.5cm

\section{Some lemmas}
In this section some technical lemmas are given.

\begin{lemma}
Let $\mathcal{P}$ be a base for a space $X$.\ Then the following are
equivalent:
\begin{enumerate}
\item $\mathcal{P}$ is an uniform base at non-isolated points for $X$;

\item $\mathcal{P}$ is a point-regular base at non-isolated points for $X$.
\end{enumerate}
\end{lemma}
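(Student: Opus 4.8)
The plan is to prove the equivalence pointwise. Since both conditions concern only non-isolated points and impose no requirement whatsoever at isolated points, it suffices to fix an arbitrary non-isolated point $x\in X$ and show that the uniform-base requirement at $x$ is equivalent to the point-regular requirement at $x$. The whole argument is the local analogue of Alexandroff's classical equivalence between uniform bases and point-regular bases, and once the definitions are unwound both implications reduce to elementary cardinality bookkeeping on the family $(\mathcal{P})_x=\{P\in\mathcal{P}:x\in P\}$.

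For $(1)\Rightarrow(2)$ I would argue by contradiction. Fix a non-isolated $x$ and an open set $U$ with $x\in U$, and suppose the collection $\{P\in(\mathcal{P})_x:P\not\subset U\}$ were infinite. Then it contains a countably infinite subset $\mathcal{P}'\subset(\mathcal{P})_x$, to which the hypothesis applies: $\mathcal{P}'$ is a neighborhood base at $x$. Hence some $P\in\mathcal{P}'$ satisfies $x\in P\subset U$, contradicting the fact that every member of $\mathcal{P}'$ was chosen so that $P\not\subset U$. Therefore $\{P\in(\mathcal{P})_x:P\not\subset U\}$ is finite, which is precisely the point-regular condition at $x$.

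For $(2)\Rightarrow(1)$ I would argue directly. Fix a non-isolated $x$ and an arbitrary countably infinite family $\mathcal{P}'\subset(\mathcal{P})_x$; to see that $\mathcal{P}'$ is a neighborhood base at $x$, take any open $U$ with $x\in U$. The point-regular condition at $x$ says that $\{P\in(\mathcal{P})_x:P\not\subset U\}$ is finite, so all but finitely many members of the infinite family $\mathcal{P}'$ satisfy $P\subset U$; in particular there is a $P\in\mathcal{P}'$ with $x\in P\subset U$. As $U$ was an arbitrary neighborhood of $x$, the family $\mathcal{P}'$ is a neighborhood base at $x$, giving the uniform-base condition at $x$.

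I do not expect a genuine obstacle here. The only point requiring care is the interaction between the two quantifiers: the uniform-base hypothesis is phrased for \emph{countably} infinite subfamilies, whereas the point-regular hypothesis controls an arbitrary infinite subfamily, so in the $(1)\Rightarrow(2)$ direction one must invoke the standard fact that every infinite subset of $\mathcal{P}$ contains a countably infinite subset in order to feed the point-regular ``bad set'' into the uniform-base assumption. Beyond that, one simply keeps in mind that no global property of $X$ is used and that the equivalence is established separately at each non-isolated point.
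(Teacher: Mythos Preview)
Your proof is correct and follows essentially the same approach as the paper: the paper also argues $(1)\Rightarrow(2)$ by contradiction, extracting a countably infinite subfamily from $\{P\in(\mathcal{P})_x:P\not\subset U\}$ and using that it must be a neighborhood base at $x$ (the paper phrases the final contradiction via an auxiliary convergent sequence $x_n\in P_n\setminus U$, whereas you appeal directly to the definition of neighborhood base, but this is a cosmetic difference). For $(2)\Rightarrow(1)$ the paper simply declares it trivial, and your written-out argument is exactly the intended one.
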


\begin{proof}
$(2)\Rightarrow (1)$ is trivial.\ We only need to prove
$(1)\Rightarrow (2)$.

Let $\mathcal{P}$ be an uniform base at non-isolated points for
$X$.\ If there exist a non-isolated point $x\in X$ and an open
subset $U$ in $X$ with $x\in U$ such that $\{P\in
(\mathcal{P})_x:P\not\subset U\}$ is infinite.\ Take
$\{P_{n}:n\in\mathbb N\}\subset\{P\in (\mathcal{P})_x:P\not\subset
U\}$, and choose $x_{n}\in P_{n}\setminus U$ for each $n\in\mathbb
N$.\ Then $\{P_n\}_{n\in\mathbb N}$ is a neighborhood base at $x$,
thus the sequence $\{x_{n}\}$ converges to $x$ in $X$.\ Hence
$x_{m}\in U$ for some $m\in \mathbb{N}$, a contradiction.\
Therefore, $\mathcal{P}$ is a point-regular base at non-isolated
points for $X$.
\end{proof}

\begin{lemma}
Let $\{\mathcal{P}_{n}\}$ be a development at non-isolated points
for a space $X$.\ If $\mathcal{P}_{n}$ is point-finite at each
non-isolated point and $\mathcal{P}_{n+1}$ refines $\mathcal{P}_{n}$
for each $n\in \mathbb{N}$,\ then $\mathcal{P}=\mathcal
I(X)\cup(\bigcup_{n\in \mathbb{N}}\mathcal{P}_{n})$ is an uniform
base at non-isolated points for $X$.
\end{lemma}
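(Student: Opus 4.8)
The plan is to first check that $\mathcal{P}$ is genuinely a base of open sets, and then—invoking Lemma 2.1—reduce the goal to verifying that $\mathcal{P}$ is a \emph{point-regular} base at non-isolated points, since that condition is the one that interacts most directly with the star structure of the $\mathcal{P}_{n}$. Every member of $\mathcal{I}(X)$ is an open singleton and every member of each $\mathcal{P}_{n}$ is open, so $\mathcal{P}$ consists of open sets. For the base property at an isolated point $x$, the singleton $\{x\}\in\mathcal{I}(X)\subset\mathcal{P}$ does the job. For a non-isolated point $x$ with $x\in U$ open, the development-at-non-isolated-points hypothesis supplies an $N$ with $x\in\mbox{st}(x,\mathcal{P}_{N})\subset U$; since $x$ lies in this star there is $P\in\mathcal{P}_{N}$ with $x\in P$, and then $P\subset\mbox{st}(x,\mathcal{P}_{N})\subset U$.

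Next I would fix a non-isolated point $x$ and an open $U\ni x$ and analyze $\{P\in(\mathcal{P})_{x}:P\not\subset U\}$. Because $x$ is non-isolated, no member of $\mathcal{I}(X)$ (a singleton of an isolated point) can contain $x$, so $(\mathcal{P})_{x}=\bigcup_{n}(\mathcal{P}_{n})_{x}$; this lets me organize the argument level by level. Choosing $N$ as above with $\mbox{st}(x,\mathcal{P}_{N})\subset U$, the crucial step is to show that the ``bad'' members all occur at levels below $N$.

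The key claim is that every $P\in(\mathcal{P}_{n})_{x}$ with $n\ge N$ already satisfies $P\subset U$. To see this I would use that $\mathcal{P}_{n}$ refines $\mathcal{P}_{n-1}$ repeatedly down to $\mathcal{P}_{N}$: by transitivity of the refinement relation there is $P'\in\mathcal{P}_{N}$ with $P\subset P'$, and since $x\in P\subset P'$ we have $P'\in(\mathcal{P}_{N})_{x}$, whence $P\subset P'\subset\mbox{st}(x,\mathcal{P}_{N})\subset U$. Consequently $\{P\in(\mathcal{P})_{x}:P\not\subset U\}\subset\bigcup_{n<N}(\mathcal{P}_{n})_{x}$. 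This propagation of the inclusion $P\subset U$ from level $N$ up to all higher levels is the part I expect to be the main obstacle, since it is exactly where both hypotheses (the refinement chain and the location of the star inside $U$) must be combined.

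Finally, each $(\mathcal{P}_{n})_{x}$ is finite because $\mathcal{P}_{n}$ is point-finite at the non-isolated point $x$, and there are only finitely many indices $n<N$; hence $\{P\in(\mathcal{P})_{x}:P\not\subset U\}$ is finite. This is precisely point-regularity at $x$, and as $x$ and $U$ were arbitrary, $\mathcal{P}$ is a point-regular base at non-isolated points. By Lemma 2.1 it is therefore a uniform base at non-isolated points, completing the argument.
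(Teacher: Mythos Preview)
Your argument is correct, but it proceeds along a somewhat different route from the paper's. The paper verifies the uniform-base definition directly: given a non-isolated $x$ and an infinite $\{P_m\}\subset(\mathcal{P})_x$, point-finiteness at $x$ is used to extract a subsequence $\{P_{m_k}\}$ with $P_{m_k}\in\mathcal{P}_{n_k}$ and $n_k\to\infty$; the development property (together, implicitly, with the refinement hypothesis) then makes $\{P_{m_k}\}$ a local base at $x$, hence so is $\{P_m\}$. You instead pass through Lemma~2.1, reducing to point-regularity and showing that the ``bad'' elements $\{P\in(\mathcal{P})_x:P\not\subset U\}$ are trapped in the finitely many levels below the index $N$ with $\mbox{st}(x,\mathcal{P}_N)\subset U$. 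Both proofs rest on the same three ingredients, combined in the same way; your version is more explicit about where the refinement chain enters (the paper leaves this tacit in asserting that $\{P_{m_k}\}$ is a local base), and you also verify that $\mathcal{P}$ is a base, which the paper omits. The paper's direct approach is a bit shorter and avoids the detour through Lemma~2.1, while yours isolates the finiteness mechanism more transparently.
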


\begin{proof}
Let $x$ be a non-isolated point in $X$, and $\{P_{m}: m\in\mathbb
N\}$ an infinite subset of $(\mathcal{P})_{x}$. By the
point-finiteness, there exists $P_{m_{k}}\in \mathcal{P}_{n_{k}}$
such that $m_{k}<m_{k+1},$ and $n_{k}<n_{k+1}$ for each $k\in
\mathbb{N}$.\ Since $\{\mathcal{P}_{n}\}$ is a development at
non-isolated points for $X$, $\{P_{m_{k}}\}_{k\in\mathbb N}$ is a
neighborhood base at $x$ in $X$, so $\{P_{m}\}_{m\in\mathbb N}$ is a
neighborhood base at $x$.\ Thus $\mathcal{P}$ is a uniform base at
non-isolated points for $X$.
\end{proof}

Let $\mathcal P$ be a family of subsets of a space $X$.\
$\mathcal{P}$ is called {\it point-finite at non-isolated points}
(resp. {\it point-countable at non-isolated points}) if for each
non-isolated point $x\in X$, $x$ is belong to at most finite(resp.
countable) elements of $\mathcal{P}$.\ Let $\{\mathcal P_n\}$ be a
development (resp. a development at non-isolated points) for $X$.
$\{\mathcal P_n\}$ is said to be a {\it point-finite development}
(resp. {\it a point-finite development at non-isolated points}) for
$X$ if each $\mathcal P_n$ is point-finite at each (resp.
non-isolated) point of $X$.

\begin{lemma}
A space $X$ has an uniform base at non-isolated points if and only
if $X$ has a point-finite development at non-isolated points.
\end{lemma}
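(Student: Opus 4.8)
The plan is to prove both implications, disposing of the easy direction first. For the ``if'' direction, suppose $\{\mathcal{P}_n\}$ is a point-finite development at non-isolated points for $X$. The only gap between this and the hypotheses of Lemma 2.2 is that the $\mathcal{P}_n$ need not refine one another, so I would first pass to the sequence of finite meets $\mathcal{Q}_n=\{\bigcap_{i\le n}P_i:P_i\in\mathcal{P}_i\ (i\le n),\ \bigcap_{i\le n}P_i\neq\emptyset\}$. Each $\mathcal{Q}_n$ consists of open sets, $\mathcal{Q}_{n+1}$ plainly refines $\mathcal{Q}_n$, and for a non-isolated $x$ the membership $x\in\bigcap_{i\le n}P_i$ forces $x\in P_i$ for every $i\le n$, so the point-finiteness of each $\mathcal{P}_i$ at $x$ makes $\mathcal{Q}_n$ point-finite at $x$. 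Since $x\in\mbox{st}(x,\mathcal{Q}_n)\subseteq\mbox{st}(x,\mathcal{P}_n)$, the family $\{\mbox{st}(x,\mathcal{Q}_n)\}_{n\in\mathbb{N}}$ is again a neighborhood base at each non-isolated $x$. Thus $\{\mathcal{Q}_n\}$ is a point-finite development at non-isolated points with $\mathcal{Q}_{n+1}$ refining $\mathcal{Q}_n$, and Lemma 2.2 returns the uniform base $\mathcal{I}(X)\cup\bigcup_n\mathcal{Q}_n$ at non-isolated points.

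For the ``only if'' direction, let $\mathcal{P}$ be a uniform base at non-isolated points; by Lemma 2.1 it is a point-regular base at non-isolated points. The task is to manufacture from $\mathcal{P}$ a sequence of open families that are point-finite at non-isolated points and whose stars shrink. The key finiteness input is that for every non-isolated $x$ and every open $U\ni x$ the escaping set $\{P\in(\mathcal{P})_x:P\not\subseteq U\}$ is finite. Consequently, for any open cover $\mathcal{V}$ of $X$, choosing $V_0\in\mathcal{V}$ with $x\in V_0$ shows that $\{P\in(\mathcal{P})_x:P\not\subseteq V\ \mbox{for all}\ V\in\mathcal{V}\}\subseteq\{P\in(\mathcal{P})_x:P\not\subseteq V_0\}$ is finite, so the integer $n(x)=|\{P\in(\mathcal{P})_x:P\ \mbox{lies in no member of}\ \mathcal{V}\}|$ is well defined at each non-isolated point.

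From here I would run the classical uniform-base argument, localized to non-isolated points. Fix a sequence of open covers drawn from $\mathcal{P}$ that witness ever smaller neighborhoods, and refine each one to a family $\mathcal{P}_n$ that is point-finite at non-isolated points. The refinement is built by induction on the rank $n(x)$: one peels off the non-isolated points of successively larger escape rank and assigns to each a base member small enough to lie inside the given cover. Point-regularity guarantees that at every non-isolated point only finitely many of the chosen sets accumulate, so each $\mathcal{P}_n$ is point-finite at non-isolated points, while the smallness of the chosen base members keeps $\mbox{st}(x,\mathcal{P}_n)$ inside the target neighborhood. Adjoining $\mathcal{I}(X)$ absorbs the isolated points, which need no control, and the resulting $\{\mathcal{P}_n\}$ is then a point-finite development at non-isolated points.

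I expect this last induction to be the main obstacle: one must simultaneously ensure that the stars $\mbox{st}(x,\mathcal{P}_n)$ genuinely form a neighborhood base at each non-isolated $x$ and that every $\mathcal{P}_n$ remains point-finite at non-isolated points, and these two demands pull against one another. Reconciling them is precisely where the point-regular (equivalently, uniform) base hypothesis at non-isolated points enters, through the finiteness of the escaping families recorded by $n(x)$; the isolated points are harmless and are handled once and for all by $\mathcal{I}(X)$.
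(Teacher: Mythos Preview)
Your treatment of the ``if'' direction is correct and actually more careful than the paper, which simply invokes Lemma~2.2 without addressing the refinement hypothesis; your passage to the meets $\mathcal{Q}_n$ is exactly the standard fix.

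The ``only if'' direction, however, is only a plan, and the approach you outline is not the one the paper uses. You propose to start from ``a sequence of open covers drawn from $\mathcal{P}$ that witness ever smaller neighborhoods'' and then refine each one to be point-finite at non-isolated points via an induction on the rank $n(x)$. But the existence of such a sequence of covers is essentially what you are trying to prove: a sequence of open covers whose stars form a local base at each non-isolated point \emph{is} a development at non-isolated points. The rank-based refinement you describe is the classical argument that a uniform base yields \emph{metacompactness} of a given cover; it does not by itself manufacture the development, and your sketch does not say where the initial sequence of covers comes from. You flag this yourself as ``the main obstacle,'' and indeed it is the whole content of the lemma.

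The paper's construction avoids this circularity by decomposing the base $\mathcal{P}$ itself into layers. After checking that every $F\in\mathcal{P}\setminus\mathcal{I}(X)$ has only finitely many supersets in $\mathcal{P}$ (an easy consequence of uniformity at a non-isolated point of $F$, which exists since $|F|>1$), one lets $\mathcal{P}_1$ consist of the $\subseteq$-maximal members of $\mathcal{P}$ together with $\mathcal{I}(X)$, shows this is a cover that is point-finite at non-isolated points, removes it, and iterates: $\mathcal{P}_{n+1}$ is the maximal layer of what remains (again with $\mathcal{I}(X)$ thrown in). The finite-superset property forces $\mathcal{P}=\bigcup_n\mathcal{P}_n$, and point-regularity then gives directly that for non-isolated $x\in U$ only finitely many members of $(\mathcal{P})_x$ fail to lie in $U$, whence $\mbox{st}(x,\mathcal{P}_n)\subseteq U$ for all large $n$. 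Both point-finiteness and the development property fall out of the layering; no auxiliary sequence of covers and no rank induction are needed.
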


\begin{proof}
Sufficiency. It is easy to see by Lemma 2.2.

Necessity.\ Let $\mathcal{P}$ be an uniform base at non-isolated
points for $X$.\ Then $\mathcal{P}$ is a point-regular base at
non-isolated points by Lemma 2.1. We can assume that if
$P\in\mathcal P$ and $P\subset I(X)$, $P$ is a single point set.

Claim: Let $x$ be a non-isolated point of $X$ and $y\neq x$. Then
$\{H\in\mathcal{P}:\{x, y\}\subset H\}$ is finite.

In fact, $\{H\in\mathcal{P}:\{x, y\}\subset H\}\subset
(\mathcal{P})_{x}$. If $\{H\in\mathcal{P}:\{x, y\}\subset H\}$ is
infinite, then it is a local base at $x$, hence $y\rightarrow x$,
this is a contradiction.

(a)\ $\mathcal{P}$ is point-countable at non-isolated points in $X$.

Let $x\in X$ be a non-isolated point, there is a non-trivial
sequence $\{x_{n}\}$ converging to $x$. By the Claim,
$\{P\in(\mathcal{P})_{x}: x_{n}\in P\}$ is finite for each $n$, then
$(\mathcal{P})_{x}=\bigcup_{n\in\mathbb{N}}\{P\in(\mathcal{P})_{x}:
x_{n}\in P\}$ is countable.

A family $\mathcal F$ of subsets of $X$ is called having the
property $(\sharp)$\ if for any $F\in \mathcal{F}\setminus\mathcal
I(X)$,\ then $\{H\in \mathcal{F}:F\subset H\}$ is finite.

(b)\ $\mathcal{P}$ has the property ($\sharp$).

Since $F\in\mathcal{P}\setminus\mathcal{I}(X)$. Then $F$ contains a
non-isolated point and $|F|>1$. By the Claim, $\mathcal{P}$ has the
property $(\sharp)$.

Put

\hspace{1cm}$\mathcal{P}^{m}=\{H\in \mathcal{P}:\mbox{if~}H\subset
P\in \mathcal{P}, \mbox{then~}P=H\}\cup\mathcal I(X)$,

\hspace{1cm}$\mathcal{P}^{\prime}=(\mathcal{P}\setminus
\mathcal{P}^{m})\cup \mathcal I(X).$

(c)\ $\mathcal{P}^{m}$ is an open cover, and is point-finite at
non-isolated points for $X$.

There exists $H_P\in \mathcal{P}^{m}$ such that $P\subset H_P$ for
each $P\in \mathcal{P}\setminus \mathcal I(X)$ by (b).\ Thus
$\mathcal{P}^{m}$ is an open cover of $X$.\ If $\mathcal{P}^{m}$ is
not point-finite at some non-isolated point $x\in X$,\ then there
exists an infinite subset $\{H_{n}:n\in\mathbb N\}$ of
$(\mathcal{P}^{m})_{x}$.\ For each $n\in\mathbb N$,
$H_{n}\not\subset H_1$, there exists $x_{n}\in H_{n+1}\setminus
H_{1}$.\ Then $x_n\to x\in H_1$, a contradiction.

(d)\ $\mathcal{P}^{\prime}$ is a point-regular base at non-isolated
points for $X$.

Let $x\in U\setminus I(X)$ with $U$ open in $X$.\ There exist
$V,W\in \mathcal{P}$ and $y\in V\setminus \{x\}$ such that $x\in
W\subset V\setminus \{y\}\subset V\subset U$.\ Thus $W\in
\mathcal{P}^{\prime}$.\ Then $\mathcal{P}^{\prime}$ is a base for
$X$,\ and it is a point-regular base at non-isolated points for $X$.

Put $\mathcal{P}_{1}=\mathcal{P}^{m}$ and
$\mathcal{P}_{n+1}=[(\mathcal{P}\setminus \bigcup_{i\leq
n}\mathcal{P}_{i})\bigcup \mathcal I(X)]^{m}$ for any $n\in
\mathbb{N}$. Then $\mathcal P=\bigcup_{n\in\mathbb
N}\mathcal{P}_{n}$ by (b).

(e)\ $\{\mathcal P_n\}$ is a point-finite development at
non-isolated points for $X$.

Each $\mathcal P_n$ is point-finite at non-isolated points by (c)
and (d). If $x\in U\setminus I(X)$ with $U$ open in $X$, then
$\{P\in(\mathcal P)_x:P\not\subset U\}$ is finite, thus there is
$n\in\mathbb N$ such that $P\subset U$ whenever $x\in P\in\mathcal
P_n$, i.e., $\mbox{st}(x, \mathcal P_n)\subset U$.\ So
$\{\mathcal{P}_{n}\}$ is a development at non-isolated points.
\end{proof}

\begin{lemma}\cite{Ar1, Ar2, He}
The following are equivalent for a space $X$:
\begin{enumerate}

\item $X$ is an open compact image of a metric space;

\item $X$ is an pseudo-open compact image of a metric space;

\item $X$ has an uniform base;

\item $X$ has a point-regular base;

\item $X$ is a metacompact and developable space;

\item $X$ is a space with a point-finite development.
\end{enumerate}
\end{lemma}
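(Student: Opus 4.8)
The plan is to run the implications around the cycle $(1)\Rightarrow(2)\Rightarrow(6)\Rightarrow(3)\Rightarrow(1)$, supplemented by the two side equivalences $(3)\Leftrightarrow(4)$ and $(5)\Leftrightarrow(6)$; once every condition sits on a closed chain, all six are equivalent. The first block I would settle is the purely topological part $(3)\Leftrightarrow(4)\Leftrightarrow(5)\Leftrightarrow(6)$, which consists of the ``all points'' analogues of Lemmas 2.1--2.3. Here $(3)\Leftrightarrow(4)$ follows from the argument of Lemma 2.1 essentially verbatim, the only extra remark being that at an isolated point $x$ the uniform-base condition already forces all but finitely many members of $\mathcal P$ containing $x$ to equal $\{x\}$, so the convergent-sequence argument is genuinely used only at non-isolated points. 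Similarly $(3)\Leftrightarrow(6)$ is the global form of Lemma 2.3: from a uniform base I produce a point-finite development by the stratification $\mathcal P=\bigcup_n\mathcal P_n$ into successive layers of $\subset$-maximal members exactly as in the proof of Lemma 2.3, and conversely a point-finite development yields a uniform base by the global version of Lemma 2.2 after passing to common refinements to make the sequence decreasing. Finally $(5)\Leftrightarrow(6)$ is the classical Alexandroff--Arhangel'ski\v{\i} characterization: a point-finite development is in particular a development and yields metacompactness, while conversely a development $\{\mathcal G_n\}$ of a metacompact space is refined level by level to a point-finite open cover $\mathcal P_n$, and since $\mathcal P_n$ refines $\mathcal G_n$ one has $\mathrm{st}(x,\mathcal P_n)\subset\mathrm{st}(x,\mathcal G_n)$, so the development property is preserved while point-finiteness is gained.

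For the map-theoretic directions, $(1)\Rightarrow(2)$ is immediate, since every open map is pseudo-open. The substantive reverse construction is $(3)\Rightarrow(1)$. Starting from a uniform base, equivalently a refining point-finite development $\{\mathcal P_n\}$ along which every decreasing thread intersects in a single point and forms a neighborhood base there, I would form
\[
M=\Big\{(P_n)\in\textstyle\prod_n\mathcal P_n:\ P_{n+1}\subset P_n\ (n\in\mathbb N),\ \textstyle\bigcap_nP_n\neq\emptyset\Big\},
\]
topologized as a subspace of the product of the discrete spaces $\mathcal P_n$; then $M$ is metrizable, being a subspace of a countable product of discrete spaces. Setting $f((P_n))$ equal to the unique point of $\bigcap_nP_n$, continuity and surjectivity follow from the development property, openness follows because the image of a basic cylinder $[P_1,\dots,P_k]$ is exactly $P_k$, and compactness of $f^{-1}(x)$ follows because point-finiteness confines each coordinate of a thread over $x$ to a finite set, so $f^{-1}(x)$ lies in a product of finite sets. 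Hence $f\colon M\to X$ is an open compact map from a metric space.

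The step I expect to be the main obstacle is $(2)\Rightarrow(6)$: recovering a point-finite development on $X$ from a merely pseudo-open compact map $f\colon M\to X$ with $M$ metric, where the weakness of pseudo-openness must be compensated by compactness of the fibres. I would fix a metric on $M$ and a sequence $\{\mathcal B_n\}$ of locally finite (hence point-finite) open covers with mesh tending to $0$. For a point $x$, compactness of $f^{-1}(x)$ together with point-finiteness of $\mathcal B_n$ reduces the star $\mathrm{st}(f^{-1}(x),\mathcal B_n)$ to a finite union $\bigcup\mathcal F_{n,x}$ of members of $\mathcal B_n$; this open set is a neighborhood of the fibre, so pseudo-openness yields $x\in\mathrm{Int}_X f(\bigcup\mathcal F_{n,x})$, and the mesh condition forces these interiors to shrink inside every neighborhood of $x$, giving the development property. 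The delicate points, which I would treat with care, are to assemble these finitely generated open sets into honest covers $\mathcal P_n$ of $X$ that are point-finite at \emph{every} point of $X$, and not just along a single fibre, and to verify the star condition simultaneously at all points. Once $(6)$ is established, the first block delivers $(3)$, and the construction of the second paragraph delivers $(1)$, closing the cycle.
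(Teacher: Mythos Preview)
The paper does not prove this lemma at all: it is stated with citations to Arhangel'ski\v{\i}\ and Heath and used as a black box, so there is no argument in the paper to compare your proposal against.

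Where you give details, your outline is sound. The internal equivalences $(3)\Leftrightarrow(4)$ and $(3)\Leftrightarrow(6)$ are indeed the ``all points'' versions of Lemmas~2.1--2.3, and your remark that at an isolated point a uniform base can contain only finitely many members distinct from $\{x\}$ is exactly what makes those arguments extend. The equivalence $(5)\Leftrightarrow(6)$ is the standard refinement argument you describe, and your Ponomarev-type thread space for $(3)\Rightarrow(1)$ is correct and parallels what the paper later does in the proof of Theorem~3.1.

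The genuine gap is the one you yourself flag, $(2)\Rightarrow(6)$; this is precisely the content of \cite{Ar2}, and your sketch does not yet close. Two concrete obstacles: first, the family $\{\mathrm{Int}\,f(\bigcup\mathcal F_{n,x}):x\in X\}$ is an open cover but there is no reason it should be point-finite, since for a fixed $y$ there may be uncountably many $x$ with $y\in\mathrm{Int}\,f(\mathrm{st}(f^{-1}(x),\mathcal B_n))$; second, if instead you enlarge to all $\mathrm{Int}\,f(\bigcup\mathcal F)$ over finite $\mathcal F\subset\mathcal B_n$, you lose the star condition, because $x\in\mathrm{Int}\,f(\bigcup\mathcal F)$ only forces \emph{one} member of $\mathcal F$ to meet $f^{-1}(x)$, so $\bigcup\mathcal F$ need not sit inside $\mathrm{st}(f^{-1}(x),\mathcal B_n)$. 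A workable route is to first upgrade $f$ to a bi-quotient map via the argument of Lemma~2.5 (with $f^{-1}(y)$ in place of $\partial f^{-1}(y)$), then use a strong development of $M$ to show that the covers $\mathcal G_n=\{\mathrm{Int}\,f(\mathrm{st}(f^{-1}(x),\mathcal B_n)):x\in X\}$ form a development of $X$ (controlling $\mathrm{st}(x,\mathcal G_n)$ by the double star $\mathrm{st}^2(f^{-1}(x),\mathcal B_n)$); this gives $(5)$ modulo metacompactness, and it is the metacompactness (equivalently, passing from a development to a \emph{point-finite} one) that carries the real work of Arhangel'ski\v{\i}'s theorem and is not supplied by your outline.
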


\begin{lemma}
Each pseudo-open,\ boundary-compact map is a bi-quotient map.
\end{lemma}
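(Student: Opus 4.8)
The plan is to verify the bi-quotient condition directly, exploiting that the only possibly non-compact part of a fiber is its interior, whose image is a single point. Fix $y\in Y$ and a family $\mathcal{U}$ of open subsets of $X$ with $f^{-1}(y)\subset\bigcup\mathcal{U}$. Since $Y$ is $T_{2}$, the singleton $\{y\}$ is closed, so $f^{-1}(y)$ is closed in $X$; consequently $f^{-1}(y)=\mbox{Int}(f^{-1}(y))\cup\partial f^{-1}(y)$. I would keep this decomposition in mind throughout, because it is what lets the boundary-compactness interact with pseudo-openness.

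First I would extract a finite subfamily. Because $f$ is boundary-compact, $\partial f^{-1}(y)$ is compact; as it is covered by $\mathcal{U}$, finitely many members $U_{1},\dots,U_{n}$ of $\mathcal{U}$ cover it. Since $f$ is onto, $f^{-1}(y)\neq\emptyset$; choosing a point $p\in f^{-1}(y)$ and a member $U_{0}\in\mathcal{U}$ with $p\in U_{0}$, I set $\mathcal{U}^{\prime}=\{U_{0},U_{1},\dots,U_{n}\}$ and $V=\bigcup\mathcal{U}^{\prime}$. By construction $\partial f^{-1}(y)\subset V$ and $y=f(p)\in f(V)$.

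Next I would apply pseudo-openness to a suitably enlarged open set. The set $W=V\cup\mbox{Int}(f^{-1}(y))$ is open and, by the decomposition above, contains $f^{-1}(y)$, so pseudo-openness yields $y\in\mbox{Int}(f(W))$. The crucial observation is that $\mbox{Int}(f^{-1}(y))\subset f^{-1}(y)$ forces $f(\mbox{Int}(f^{-1}(y)))\subset\{y\}$, whence $f(W)=f(V)\cup f(\mbox{Int}(f^{-1}(y)))\subset f(V)\cup\{y\}=f(V)$, using $y\in f(V)$. Monotonicity of the interior operator then gives $y\in\mbox{Int}(f(W))\subset\mbox{Int}(f(V))=\mbox{Int}(f(\bigcup\mathcal{U}^{\prime}))$, which is exactly the bi-quotient condition.

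I expect the main subtlety to be the bookkeeping that guarantees $y\in f(V)$: without adjoining the extra set $U_{0}$ covering a point of the fiber, the inclusion $f(W)\subset f(V)\cup\{y\}$ would not collapse to $f(W)\subset f(V)$, and the argument would stall precisely when $\partial f^{-1}(y)=\emptyset$, that is, when the fiber is clopen. Including $U_{0}$ sidesteps this degenerate case uniformly, so no separate treatment of isolated points of $Y$ is required.
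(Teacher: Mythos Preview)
Your proof is correct and follows essentially the same approach as the paper's: extract a finite subfamily of $\mathcal{U}$ covering $\partial f^{-1}(y)$, make sure it meets the fiber so that $y\in f(\bigcup\mathcal{U}')$, enlarge by $\mbox{Int}(f^{-1}(y))$ to cover the whole fiber, apply pseudo-openness, and then collapse $f(W)$ to $f(\bigcup\mathcal{U}')$ using $f(\mbox{Int}(f^{-1}(y)))\subset\{y\}$. The only cosmetic difference is that the paper phrases the inclusion of a fiber-meeting set as ``we can assume'' rather than explicitly adjoining your $U_{0}$.
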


\begin{proof}
Let $f:X\rightarrow Y$ be a pseudo-open,\ boundary-compact map.\ For
each $y\in Y$ and a family $\mathcal{U}$ of open subsets in $X$ with
$f^{-1}(y)\subset \cup \mathcal{U}$,\ $\partial f^{-1}(y)\subset
\cup \mathcal{U}^{\prime}$ for some finite
$\mathcal{U}^{\prime}\subset\mathcal U$.\ We can assume that there
exists $U\in \mathcal{U}^{\prime}$ such that $U\cap f^{-1}(y)\neq
\emptyset$.\ Thus $y\in f(U)$.\ Let $V=(\cup
\mathcal{U}^{\prime})\cup\mbox{Int}(f^{-1}(y))$.\ Then
$f^{-1}(y)\subset V$.\ Since $f$ is pseudo-open, thus $$y\in
\mbox{Int}(f(V))\subset f((\cup \mathcal{U}^{\prime})\cup
f^{-1}(y))= f(\cup \mathcal{U}^{\prime})\cup \{y\}=f(\cup
\mathcal{U}^{\prime}),$$ so $f(\cup \mathcal{U}^{\prime})$ is a
neighborhood of $y$ in $Y$.\ Hence $f$ is a bi-quotient map.
\end{proof}

\vskip 0.5cm

\section{Main Results}
In this section spaces with an uniform base at non-isolated points
are discussed, and some characterizations of images of metric spaces
by boundary-compact maps are given.

\begin{theorem}
The following are equivalent for a space $X$:
\begin{enumerate}
\item $X$ is an open,\ boundary-compact image of a metric space;

\item $X$ has an uniform base at non-isolated points;

\item $X$ has a point-regular base at non-isolated points;

\item $X$ has a point-finite development at non-isolated points.
\end{enumerate}
\end{theorem}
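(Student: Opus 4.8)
The plan is to establish the four equivalences by first disposing of the purely internal ones and then concentrating on the map-theoretic statement (1). By Lemma 2.1 conditions (2) and (3) are equivalent, and by Lemma 2.3 conditions (2) and (4) are equivalent; thus it suffices to prove $(1)\Leftrightarrow(4)$, and I would phrase both implications in terms of the point-finite development at non-isolated points, that being the most geometric of the internal descriptions.

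For $(1)\Rightarrow(4)$, let $f:M\to X$ be open and boundary-compact with $M$ metrizable. The first observation I would record is that for a non-isolated point $x$ the fiber $f^{-1}(x)$ has empty interior---otherwise a nonempty open $U\subset f^{-1}(x)$ would give $f(U)=\{x\}$ open, making $x$ isolated---so $f^{-1}(x)=\partial f^{-1}(x)$ is compact. Now fix for each $n$ a locally finite open cover $\mathcal B_n$ of $M$ by sets of diameter $<1/n$, and set $\mathcal P_n=\{f(B):B\in\mathcal B_n\}$, a family of open sets because $f$ is open. Since $x\in f(B)$ iff $B$ meets $f^{-1}(x)$, compactness of $f^{-1}(x)$ together with local finiteness of $\mathcal B_n$ forces only finitely many members of $\mathcal P_n$ to contain a non-isolated $x$, which is point-finiteness at non-isolated points. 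For the development property at such $x$, I use compactness of $f^{-1}(x)\subset f^{-1}(U)$ to find an $\varepsilon$-swelling of $f^{-1}(x)$ inside $f^{-1}(U)$ and then take $n$ with $1/n<\varepsilon$, so that every $B\in\mathcal B_n$ meeting $f^{-1}(x)$ lies in $f^{-1}(U)$ and hence $\mathrm{st}(x,\mathcal P_n)\subset U$.

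For the converse $(4)\Rightarrow(1)$---which I expect to be the main obstacle---I would run a Ponomarev-type construction. After replacing the given data by a refining point-finite development $\{\mathcal Q_n\}$ at non-isolated points (intersecting finitely many levels) and adjoining the singletons $\{x\}$ with $x\in I(X)$ to each $\mathcal Q_n$, I form $M\subset\prod_n\mathcal Q_n$ (each factor discrete, so $M$ is metrizable) consisting of the decreasing sequences $(Q_n)$ with $\bigcap_nQ_n\neq\emptyset$ such that $\{Q_n\}$ is a neighborhood base at its limit point, necessarily unique by $T_2$, and I define $f((Q_n))$ to be that point. Continuity and surjectivity are routine; openness follows because a basic neighborhood fixing the first $k$ coordinates $Q_1\supset\cdots\supset Q_k$ has image exactly $Q_k$, each $y\in Q_k$ being realized by prolonging the finite chain to a neighborhood base at $y$ via the development property, or via the singleton at an isolated $y$.

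The delicate point is boundary-compactness, and here the two kinds of points must be separated. For non-isolated $x$ the fiber $f^{-1}(x)$ is precisely the branch space of the tree of finite decreasing chains through $x$; point-finiteness at non-isolated points makes this tree finitely branching, so its branch space is an inverse limit of finite sets and therefore compact, while openness again forces $f^{-1}(x)$ to have empty interior, whence $\partial f^{-1}(x)=f^{-1}(x)$ is compact. For isolated $x$ every element of $f^{-1}(x)$ must use the coordinate $\{x\}$ at some level, which shows on the one hand that $f^{-1}(x)$ is open and, on the other---by checking that any limit of such sequences again contains $x$ in every coordinate and hence maps to $x$---that $f^{-1}(x)$ is closed, so $\partial f^{-1}(x)=\emptyset$. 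This produces an open, boundary-compact map of the metric space $M$ onto $X$ and closes the cycle. The crux throughout is the repeated use of the empty-interior-of-fibers phenomenon at non-isolated points to convert ``boundary-compact'' into honest compactness exactly where the point-finiteness hypothesis is available.
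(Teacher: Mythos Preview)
Your proposal is correct and follows essentially the same route as the paper: the internal equivalences $(2)\Leftrightarrow(3)\Leftrightarrow(4)$ are dispatched via Lemmas 2.1 and 2.3, the implication $(1)\Rightarrow(4)$ pushes a locally finite development of $M$ forward through the open map and uses the observation $f^{-1}(x)=\partial f^{-1}(x)$ at non-isolated $x$, and $(4)\Rightarrow(1)$ is a Ponomarev construction with the singletons $\{x\}$, $x\in I(X)$, adjoined to each level. The only noteworthy difference is that you pass to a refining development (via finite intersections of levels) and take $M$ to consist of \emph{decreasing} chains, so that the basic open set $B(Q_1,\dots,Q_k)$ has image exactly $Q_k$; the paper instead keeps the original $\{\mathcal P_n\}$, allows arbitrary sequences $(P_{\alpha_n})$ forming a local base, and shows $f(B(\alpha_1,\dots,\alpha_n))=\bigcap_{i\le n}P_{\alpha_i}$. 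Both variants are standard, and the compactness of the fiber over a non-isolated point comes out the same way---as (a closed subset of) a product of the finite sets $\{P\in\mathcal P_n:x\in P\}$.
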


\begin{proof}
It is obvious that $(2)\Leftrightarrow (3)\Leftrightarrow (4)$ by
Lemma 2.1 and Lemma 2.3.

$(1)\Rightarrow (4)$.\ Let $M$ be a metric space and $f:M\rightarrow
X$ an open,\ boundary-compact map.\ By \cite[5.4.E]{En}, we can
choose a sequence $\{\mathcal{B}_{i}\}$ of open covers of $M$ such
that $\{\mbox{st}(K,\mathcal{B}_{i})\}_{i\in\mathbb N}$ is a
neighborhood base of $K$ in $M$ for each compact subset $K\subset
M$.\ For each $i\in\mathbb N$, we can assume that
$\mathcal{B}_{i+1}$ is a locally finite open refinement of $\mathcal
B_i$, and set $\mathcal{P}_{i}=f(\mathcal{B}_{i})\cup \mathcal
I(X)$.\ Then $\mathcal P_i$ is an open cover of $X$ for each $i\in
\mathbb{N}$. If $x$ is an accumulation point of $X$, then
$\mbox{Int}f^{-1}(x)=\emptyset$, thus $f^{-1}(y)=\partial f^{-1}(x)$
is compact in $M$, hence $\{B\in\mathcal B_i:B\cap
f^{-1}(x)\not=\emptyset\}$ is finite by the local finiteness of
$\mathcal B_i$, i.e., $(\mathcal P_i)_x$ is finite.\ This shows that
$\mathcal{P}_{i}$ is point-finite at non-isolated points.\ Next, we
will prove that $\{\mathcal{P}_i\}$ is a development at non-isolated
points for $X$.\ Let $x\in U\setminus I(X)$ with $U$ open in $X$.\
Since $f^{-1}(x)$ is compact,\ there exists $m\in \mathbb{N}$ such
that $\mbox{st}(f^{-1}(x),\mathcal{B}_{m})\subset f^{-1}(U)$,\ so
$\mbox{st}(x,\mathcal{P}_{m})=\mbox{st}(x,f(\mathcal{B}_{m}))\subset
U$.\ Thus $\{\mathcal{P}_i\}$ is a point-finite development at
non-isolated points for $X$.

$(4)\Rightarrow (1)$.\ First, a metric space $M$ and a function
$f:M\to X$ are defined as follows. Let $\{\mathcal{P}_{n}\}$ be a
point-finite development at non-isolated points for $X$.\ For each
$n\in\mathbb N$, assume that $\mathcal I(X)\subset\mathcal P_n$, put
$\mathcal{P}_{n}=\{P_{\alpha}:\alpha \in \Lambda_{n}\}$ and endow
$\Lambda_{n}$ a discrete topology.\ Put
$$M=\{\alpha=(\alpha_{n})\in
\prod_{n\in \mathbb{N}}\Lambda_{n}: \{P_{\alpha_{n}}\}_{n\in\mathbb
N}\mbox{ is a neighborhood base}$$ $$\mbox{at some }x_{\alpha}\in
X\}.$$ Then $M$,\ which is a subspace of the product space
$\prod_{n\in \mathbb{N}}\Lambda_{n}$,\ is a metric space.\ Define a
function $f:M\rightarrow X$ by $f((\alpha_{n}))=x_{\alpha}$.\ Then
$f((\alpha_{n}))=\bigcap_{n\in\mathbb N}P_{\alpha_n}$, and $f$ is
well defined.\ $(f,M,X,\mathcal{P}_{n})$ is called a Ponomarev
system. It is easy to see that $f$ is a map. The following will
prove that $f$ is an open boundary-compact map.

(a)\ $f$ is an open map.

For any $\alpha =(\alpha_{n})\in M,n\in \mathbb{N}$,\ put
$$B(\alpha_{1},\alpha_{2},\cdots ,\alpha_{n})=\{(\beta_{i})\in
M:\beta_{i}=\alpha_{i}\ \mbox{whenever $i\leq n$}\}.$$ Then
$f(B(\alpha_{1},\alpha_{2},\cdots ,\alpha_{n}))=\bigcap_{i\leq
n}P_{\alpha_{i}}$.\ In fact,\ if $\beta =(\beta_{i})\in
B(\alpha_{1},\alpha_{2},\cdots ,\alpha_{n})$,
$f(\beta)=\bigcap_{i\in \mathbb{N}}P_{\beta_{i}}\subset
\bigcap_{i\leq n}P_{\alpha_{i}}$.\ Thus
$$f(B(\alpha_{1},\alpha_{2},\cdots ,\alpha_{n}))\subset
\bigcap_{i\leq n}P_{\alpha_{i}}.$$ On the other hand,\ let $x\in
\bigcap_{i\leq n}P_{\alpha_{i}}$.\ Choose a countable family
$\{P_{\beta_{i}}\}_{i\in \mathbb{N}}$ of subsets of $X$ such that

(i)\ $x\in P_{\beta_{i}}\in\mathcal P_i$ for each $i\in\mathbb N$,

(ii)\ $\beta_{i}=\alpha_{i}$ whenever $i\leq n$, and

(iii)\ $P_{\beta_i}=\{x\}$ whenever $i>n$ and $x\in I(X)$.

Put $\beta =(\beta_{i})$.\ Then $\beta\in
B(\alpha_{1},\alpha_{2},\cdots ,\alpha_{n})$, and $f(\beta)=x$.\
Thus $\bigcap_{i\leq n}P_{\alpha_{i}}\subset
f(B(\alpha_{1},\alpha_{2},\cdots ,\alpha_{n}))$.

In conclusion,\ $f(B(\alpha_{1},\alpha_{2},\cdots
,\alpha_{n}))=\bigcap_{i\leq n}P_{\alpha_{i}}$.\ Since
$$\{B(\alpha_{1},\alpha_{2},\cdots ,\alpha_{n}):(\alpha_{i})\in M,
n\in \mathbb{N}\}$$ is a base of $M$, $f$ is an open map.

(b)\ $f$ is a boundary-compact map.

Let $x\in X$.\ If $x\in I(X)$,\ then $\partial
f^{-1}(x)=\emptyset$.\ If $x\not\in I(X)$,\ $\partial
f^{-1}(x)=f^{-1}(x)$ by (b).\ For each $i\in\mathbb N$, let
$\Gamma_{i}=\{\alpha\in \Lambda_{i}:x\in P_{\alpha}\}$.\ Then
$\Gamma_{i}$ is finite.\ Thus $\prod_{i\in \mathbb{N}}\Gamma_{i}$ is
a compact subset of $\prod_{i\in \mathbb{N}}\Lambda_{i}$.\ We only
need to proof $f^{-1}(x)=\prod_{i\in \mathbb{N}}\Gamma_{i}$.\
Indeed,\ if $\alpha =(\alpha_{i})\in \prod_{i\in
\mathbb{N}}\Gamma_{i}$,\ then $\{P_{\alpha_{i}}\}_{i\in\mathbb N}$
is a neighborhood base at $x$ for $X$.\ Thus $\alpha\in M$ and
$f(\alpha)=x$,\ so $\prod_{i\in \mathbb{N}}\Gamma_{i}\subset
f^{-1}(x)$.\ On the other hand,\ if $\alpha =(\alpha_{i})\in
f^{-1}(x)$,\ then $x\in \bigcap_{i\in \mathbb{N}}P_{\alpha_{i}}$ and
$\alpha\in \prod_{i\in \mathbb{N}}\Gamma_{i}$.\ So $f^{-1}(x)\subset
\prod_{i\in \mathbb{N}}\Gamma_{i}$.\ Thus $\partial
f^{-1}(x)=f^{-1}(x)=\prod_{i\in \mathbb{N}}\Gamma_{i}$ is compact.
\end{proof}

In the Ponomarev system $(f, M, X, \mathcal P_n)$,\ it is always
hold that $f^{-1}(x)\subset \prod_{i\in \mathbb{N}}\{\alpha\in
\Lambda_{i}:x\in P_{\alpha}\}$ for each $x\in X$. The following
corollary is obtained.

\begin{corollary}
A space $X$ has a point-countable base which is uniform at
non-isolated points if and only if $X$ is an open boundary-compact,
$s$-image of a metric space.
\end{corollary}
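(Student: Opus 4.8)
The plan is to thread the two extra hypotheses through the Ponomarev system $(f,M,X,\mathcal{P}_n)$ already built for Theorem 3.1: a globally point-countable base should force every fiber of $f$ to be separable, and conversely separable fibers should force the base extracted from $f$ to be point-countable. So I would change nothing in the openness, boundary-compactness, or development arguments of Theorem 3.1 and Lemmas 2.2 and 2.3, and add only two bookkeeping arguments about countability, one for each implication.

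For necessity, suppose $X$ has a point-countable base $\mathcal{P}$ that is uniform at non-isolated points. Running the construction in the proof of Lemma 2.3 on $\mathcal{P}$ produces a point-finite development at non-isolated points $\{\mathcal{P}_n\}$ with each $\mathcal{P}_n\subset\mathcal{P}\cup\mathcal I(X)$ and $\mathcal I(X)\subset\mathcal{P}_n$. Since $\mathcal{P}$ is point-countable and $\mathcal I(X)$ contributes at most one member to any $(\,\cdot\,)_x$, each $\mathcal{P}_n$ is point-countable \emph{at every} point of $X$, not merely at the non-isolated ones. I would then feed $\{\mathcal{P}_n\}$ into the Ponomarev system of Theorem 3.1 $(4)\Rightarrow(1)$, which already yields an open boundary-compact map $f:M\to X$, and only verify that $f$ is an $s$-map. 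Using the inclusion $f^{-1}(x)\subset\prod_{i}\Gamma_i$ with $\Gamma_i=\{\alpha\in\Lambda_i:x\in P_\alpha\}$ recorded in the remark preceding the corollary, point-countability makes each $\Gamma_i$ countable, so $\prod_i\Gamma_i$ is a countable product of countable discrete spaces, hence second countable and separable; therefore $f^{-1}(x)$, as a subspace of a separable metric space, is separable for every $x$.

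For sufficiency, let $f:M\to X$ be an open boundary-compact $s$-map from a metric space. I would reproduce the proof of Theorem 3.1 $(1)\Rightarrow(4)$ to obtain locally finite open covers $\{\mathcal{B}_i\}$ of $M$ and the families $\mathcal{P}_i=f(\mathcal{B}_i)\cup\mathcal I(X)$, which form a point-finite development at non-isolated points, so that by Lemma 2.2 the family $\mathcal{P}=\mathcal I(X)\cup\bigcup_i\mathcal{P}_i$ is a uniform base at non-isolated points. To see $\mathcal{P}$ is point-countable, fix $x\in X$: since $f$ is an $s$-map, $f^{-1}(x)$ is separable metric, hence Lindel\"of, and covering it by subspace neighborhoods each meeting only finitely many members of the locally finite $\mathcal{B}_i$ and extracting a countable subcover shows that only countably many $B\in\mathcal{B}_i$ meet $f^{-1}(x)$. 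Hence $(\mathcal{P}_i)_x=\{f(B):B\in\mathcal{B}_i,\ B\cap f^{-1}(x)\neq\emptyset\}\cup(\mathcal I(X))_x$ is countable, and taking the union over $i$ preserves point-countability, giving the required point-countable base.

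The genuinely new content is exactly these two countability transfers, and I expect the sufficiency direction to be the main obstacle: one must combine the Lindel\"of property of the separable fiber $f^{-1}(x)$ with the local finiteness of $\mathcal{B}_i$ in precisely the right way to bound $|\{B\in\mathcal{B}_i:B\cap f^{-1}(x)\neq\emptyset\}|$ by $\aleph_0$, and then check that passing to the image family $f(\mathcal{B}_i)$ and unioning over $i$ does not destroy point-countability. The necessity direction is comparatively routine, resting only on the elementary fact that a countable product of countable discrete spaces is separable.
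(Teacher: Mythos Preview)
Your proposal is correct and follows exactly the route the paper intends: the one-line remark preceding the corollary, namely $f^{-1}(x)\subset\prod_{i\in\mathbb N}\{\alpha\in\Lambda_i:x\in P_\alpha\}$, is the whole of the paper's proof, and you have simply supplied the details for both directions. In particular, your necessity argument (point-countable $\mathcal P_n$ forces countable $\Gamma_i$, hence separable $\prod_i\Gamma_i$) is precisely what that remark is pointing at, and your sufficiency argument via Lindel\"ofness of the separable metric fiber against the locally finite $\mathcal B_i$ is the standard complement that the paper leaves unwritten.
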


\begin{corollary}
Each space having an uniform base at non-isolated points is
preserved by an open, boundary-finite map.
\end{corollary}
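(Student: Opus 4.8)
The plan is to exploit the characterization in Theorem 3.1 at both ends of the map. Suppose $X$ has a uniform base at non-isolated points and $f:X\to Y$ is an open, boundary-finite (surjective) map. By Theorem 3.1, $(2)\Rightarrow(1)$, there exist a metric space $M$ and an open, boundary-compact map $g:M\to X$. I would then pass to the composition $h=f\circ g:M\to Y$ and argue that $h$ is again an open, boundary-compact map of a metric space onto $Y$; applying Theorem 3.1, $(1)\Rightarrow(2)$, to $h$ then yields that $Y$ has a uniform base at non-isolated points, which is exactly the assertion.

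The routine part is quickly disposed of: $h$ is onto because $f$ and $g$ are, and $h$ is open because a composition of open maps is open. Everything therefore reduces to verifying that $h$ is boundary-compact, and this is where the real work lies.

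Fix $y\in Y$ and set $A=f^{-1}(y)$; since $Y$ is $T_{2}$ (hence $T_{1}$) and $f$ is continuous, $A$ is closed in $X$, so $\partial A=A\setminus\mbox{Int}(A)$, and by hypothesis this boundary is finite, say $\partial A=\{x_{1},\dots ,x_{k}\}$. Because $h^{-1}(y)=g^{-1}(A)$ and $g^{-1}(A)$ is closed, we have $\partial g^{-1}(A)=g^{-1}(A)\setminus\mbox{Int}(g^{-1}(A))$, and the crux is the inclusion
$$\partial g^{-1}(A)\subset\bigcup_{i=1}^{k}\partial g^{-1}(x_{i}).$$
To see it, take $p\in\partial g^{-1}(A)$ and put $x=g(p)\in A$. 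If $x\in\mbox{Int}(A)$, then $g^{-1}(\mbox{Int}(A))$ is an open neighbourhood of $p$ contained in $g^{-1}(A)$, contradicting $p\notin\mbox{Int}(g^{-1}(A))$; hence $x=x_{i}$ for some $i$. This $x_{i}$ cannot be isolated, for otherwise $\{x_{i}\}$ is clopen, $g^{-1}(x_{i})$ is a clopen subset of $g^{-1}(A)$, and $p\in g^{-1}(x_{i})\subset\mbox{Int}(g^{-1}(A))$, a contradiction. Finally $p\notin\mbox{Int}(g^{-1}(x_{i}))$, since an open $V\ni p$ with $V\subset g^{-1}(x_{i})$ would give $g(V)=\{x_{i}\}$ open by the openness of $g$, making $x_{i}$ isolated. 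As $g^{-1}(x_{i})$ is closed, we get $p\in\partial g^{-1}(x_{i})$, proving the inclusion.

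Each $\partial g^{-1}(x_{i})$ is compact because $g$ is boundary-compact, so the right-hand side is a finite union of compact sets, hence compact; as $\partial g^{-1}(A)$ is closed and contained in it, $\partial h^{-1}(y)=\partial g^{-1}(A)$ is compact, and $h$ is boundary-compact. The main obstacle is precisely this boundary computation: controlling $\partial g^{-1}(A)$ needs the openness of $g$ (to exclude interior points lying over the $x_{i}$) together with the \emph{finiteness} of $\partial A$, since it is a \emph{finite}, rather than merely compact, union of the compact fibre-boundaries $\partial g^{-1}(x_{i})$ that is guaranteed to be compact. This is exactly why the hypothesis on $f$ is taken to be boundary-\emph{finite} rather than boundary-compact.
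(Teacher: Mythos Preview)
Your argument is correct and follows exactly the paper's approach: factor through Theorem~3.1 to obtain an open boundary-compact $g:M\to X$ from a metric space, then show $f\circ g$ is again open and boundary-compact via the inclusion $\partial(f\circ g)^{-1}(y)\subset\bigcup\{\partial g^{-1}(x):x\in\partial f^{-1}(y)\}$. The paper states this inclusion without justification, whereas you supply a careful verification of it; otherwise the proofs coincide.
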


\begin{proof}
let $f:X\rightarrow Y$ be an open boundary-finite map, where $X$ has
an uniform base at non-isolated points. There exist a metric space
$M$ and an open boundary-compact map $g:M\rightarrow X$ by Theorem
3.1. Since $\partial (f\circ g)^{-1}(y)\subset \bigcup\{\partial
g^{-1}(x):x\in \partial f^{-1}(y)\}$ for each $y\in Y$,\ $f\circ
g:M\rightarrow Y$ is an open boundary-compact map. Hence $Y$ has an
uniform base at non-isolated points.
\end{proof}

\begin{theorem}
Let $X$ be a space having an uniform base at non-isolated points.
Then
\begin{enumerate}
\item $X$ is a quasi-developable space;

\item $X$ has an ortho-base and a $\sigma$-Q base.
\end{enumerate}
\end{theorem}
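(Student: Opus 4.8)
The plan is to reduce the whole statement to the point-finite development at non-isolated points supplied by Theorem 3.1 together with Lemma 2.3. So I first fix such a development $\{\mathcal P_n\}$ for $X$, arranged (as in the proof of Lemma 2.3) so that $\mathcal I(X)\subset\mathcal P_n$ for every $n$ and so that $\mathcal P=\bigcup_{n\in\mathbb N}\mathcal P_n$ is a uniform base at non-isolated points. Every claim will be checked against this concrete data, and the only recurring subtlety is to separate the behavior at isolated points from that at non-isolated points.

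For (1) I will verify that the sequence $\mathcal I(X),\mathcal P_1,\mathcal P_2,\dots$ is a quasi-development. If $x$ is isolated, then $\mbox{st}(x,\mathcal I(X))=\{x\}$, since $\{x\}$ is the only member of $\mathcal I(X)$ containing $x$; hence for any open $U\ni x$ the very first family gives $x\in\mbox{st}(x,\mathcal I(X))=\{x\}\subset U$. If $x$ is non-isolated and $x\in U$ with $U$ open, then since $\{\mathcal P_n\}$ is a development at non-isolated points there is $m$ with $\mbox{st}(x,\mathcal P_m)\subset U$; note that the singletons in $\mathcal I(X)$ never contain the non-isolated point $x$, so they do not enlarge this star. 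Thus the defining condition of a quasi-development holds at every point of $X$.

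For the $\sigma$-Q base in (2) I will show that each $\mathcal P_n$ is interior-preserving, so that $\mathcal P=\bigcup_n\mathcal P_n$ is a $\sigma$-Q base. Fix $\mathcal W\subset\mathcal P_n$ and $x\in\bigcap\mathcal W$. If $x$ is isolated, then $\{x\}$ is open and $\{x\}\subset\bigcap\mathcal W$, so $\bigcap\mathcal W$ is a neighborhood of $x$. If $x$ is non-isolated, then point-finiteness at non-isolated points makes $\{W\in\mathcal P_n:x\in W\}$ finite; as every member of $\mathcal W$ contains $x$, the family $\mathcal W$ is finite and $\bigcap\mathcal W$ is a finite intersection of open sets, again a neighborhood of $x$. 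Since $x\in\bigcap\mathcal W$ was arbitrary, $\bigcap\mathcal W$ is open, i.e. $\mathcal P_n$ is $Q$.

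The real work lies in the ortho-base, for which I will test $\mathcal P$ directly against the definition. Given $\mathcal A\subset\mathcal P$, set $D=\bigcap\mathcal A$ and distinguish three cases. If $\mathcal A$ is finite or $D=\emptyset$, then $D$ is open. If $\mathcal A$ is infinite and $D$ contains a non-isolated point $x$, then $\mathcal A\subset(\mathcal P)_x$ is infinite, so any countably infinite $\mathcal A'\subset\mathcal A$ is a neighborhood base at $x$ by the uniform-base-at-non-isolated-points property; Hausdorffness then gives $\bigcap\mathcal A'=\{x\}$, whence $D=\{x\}\notin\mathcal I(X)$ and $\mathcal A\supset\mathcal A'$ is itself a neighborhood base at $x$. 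In the remaining case $\mathcal A$ is infinite and every point of $D$ is isolated, so $D=\bigcup_{y\in D}\{y\}$ is a union of open singletons and hence open. These cases exhaust the definition, so $\mathcal P$ is an ortho-base. The step I expect to demand the most care is precisely this last trichotomy: showing that once an infinite subfamily has its intersection pinched to a single point, that point must be non-isolated and the subfamily must be a genuine neighborhood base — this is where both the $T_2$ hypothesis and the defining property of a uniform base at non-isolated points are essential.
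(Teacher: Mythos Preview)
Your proof is correct, and for parts (1) and the $\sigma$-$Q$ base in (2) it is essentially identical to the paper's argument: both put $\mathcal I(X)$ in front of a point-finite development at non-isolated points to obtain a quasi-development, and both check interior-preservation of each $\mathcal P_n$ by splitting into the isolated/non-isolated cases.

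The ortho-base argument is where you diverge from the paper. The paper assumes $\bigcap\mathcal A$ is not open, picks a non-isolated $x\in\bigcap\mathcal A$ at which it fails to be a neighborhood, and then uses the \emph{development} structure: given open $U\ni x$, it finds $n$ with $\mbox{st}(x,\mathcal P_n)\subset U$ and an $A\in\mathcal A\cap\mathcal P_m$ for some $m\geq n$ (which exists because $\mathcal A$, being contained in $(\mathcal P)_x$, can meet only finitely many levels below any fixed index), so $A\subset\mbox{st}(x,\mathcal P_m)\subset U$. You instead bypass the level structure entirely and appeal directly to the \emph{uniform base at non-isolated points} property: any countably infinite $\mathcal A'\subset\mathcal A\subset(\mathcal P)_x$ is already a neighborhood base at $x$, and $T_2$ pins $\bigcap\mathcal A'$ down to $\{x\}$. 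Your trichotomy (finite/empty; infinite with a non-isolated point in the intersection; infinite with intersection inside $I(X)$) is clean and exhaustive. The paper's route emphasizes that the development alone carries the ortho-base, while yours shows that the uniform-base hypothesis does the work in one stroke; your version is shorter and avoids the bookkeeping of tracking which level of the development each $A\in\mathcal A$ comes from.
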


\begin{proof}
By Theorem 3.1, let $\{\mathcal P_n\}_{n\in\mathbb N}$ be a
point-finite development at non-isolated points for $X$. Put
$\mathcal P_0=\mathcal I(X)$.\ It is easy to check that $\{\mathcal
P_n\}_{n\in\omega}$ is a quasi-development for $X$.

Let $\mathcal{P}=\bigcup_{n\in \omega}\mathcal{P}_{n}$. Then
$\mathcal{P}$ is a $\sigma$-Q base and an ortho-base for $X$.

First, $\mathcal{P}_{n}$ is interior-preserving for each
$n\in\mathbb N$. Indeed, for each $\mathcal{A}\subset
\mathcal{P}_{n}$, if $x\in \cap \mathcal{A}-I(X)$, then
$(\mathcal{P}_{n})_{x}$ is finite, thus $\cap \mathcal{A}$ is a
neighborhood of $x$ in $X$. So $\mathcal{P}$ is a $\sigma$-Q base
for $X$.

Secondly, let $\mathcal{A}\subset \mathcal{P}$ with $\cap
\mathcal{A}$ not open in $X$. Then there exists $x\in \cap
\mathcal{A}$ such that $\cap \mathcal{A}$ is not a neighborhood of
$x$ in $X$, thus $x$ is a non-isolated point and
$(\mathcal{P}_{n})_{x}$ is finite for each $n\in\mathbb N$. Let
$x\in U$ with $U$ open in $X$. There exists $n\in \mathbb{N}$ such
that $x\in\mbox{st}(x,\mathcal{P}_{n})\subset U$. Choose $m\geq n$
and $A\in \mathcal{A}\cap \mathcal{P}_{m}$. Then $A\subset
\mbox{st}(x,\mathcal{P}_{n})\subset U$, thus $\mathcal{A}$ is a
neighborhood base at $x$ in $X$. So $\cap \mathcal{A}$ is a single
point subset. Hence $\mathcal{P}$ is an ortho-base for $X$.
\end{proof}

\begin{corollary}
Let $X$ be a space having an uniform base at non-isolated points.
Then $(1)\Rightarrow (2)\Leftrightarrow (3)$ in the following:
\begin{enumerate}
\item $X$ has a sharp base;

\item $X$ is a developable space;

\item $I(X)$ is an $F_{\sigma}$-set in $X$.
\end{enumerate}
\end{corollary}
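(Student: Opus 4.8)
The plan is to handle the three parts separately, proving the equivalence $(2)\Leftrightarrow(3)$ first (where I already control both directions) and saving $(1)\Rightarrow(2)$ for last, since that is where the real work lies.

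For $(3)\Rightarrow(2)$, I would start from a point-finite development at non-isolated points $\{\mathcal P_n\}$ with $\mathcal I(X)\subseteq\mathcal P_n$ (available by Theorem 3.1), and use the hypothesis to write $I(X)=\bigcup_m F_m$ with each $F_m$ closed; put $U_m=X\setminus F_m$. The key construction is the countable double-indexed family $\mathcal G_{n,m}=\{P\cap U_m:P\in\mathcal P_n\}\cup\mathcal I(X)$. At a non-isolated point $y$ one has $y\in U_m$ for every $m$, so $\mbox{st}(y,\mathcal G_{n,m})=U_m\cap\mbox{st}(y,\mathcal P_n)$ lies inside any prescribed neighbourhood of $y$ once $n$ is large; at an isolated point $x$ one has $x\in F_m$ for all large $m$, so for such $m$ the only member of $\mathcal G_{n,m}$ meeting $x$ is $\{x\}$ and $\mbox{st}(x,\mathcal G_{n,m})=\{x\}$. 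Re-indexing $\{\mathcal G_{n,m}\}$ as a single sequence then yields a genuine development, so $X$ is developable.

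For $(2)\Rightarrow(3)$, I would use that a developable space is perfect. Given a development $\{\mathcal P_n\}$ and any open $U$, the sets $V_n=\{x\in U:\mbox{st}(x,\mathcal P_n)\subseteq U\}$ satisfy $\overline{V_n}\subseteq U$: if $z\in\overline{V_n}$, choose $P\in\mathcal P_n$ with $z\in P$; then $P$ meets some $x\in V_n$, so $P\subseteq\mbox{st}(x,\mathcal P_n)\subseteq U$ and $z\in U$. Hence $U=\bigcup_n V_n=\bigcup_n\overline{V_n}$ is $F_\sigma$. Since $I(X)$ is open, it is therefore $F_\sigma$, giving $(3)$.

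For $(1)\Rightarrow(2)$, the strategy is to extract from the two hypotheses exactly the two ingredients of the Worrell--Wicke characterisation of developability. First, a sharp base $\mathcal B$ is a base of countable order: any strictly decreasing sequence $\{B_i\}\subseteq\mathcal B$ with $x\in\bigcap_i B_i$ is injective, so sharpness makes $\{\bigcap_{i\le n}B_i\}_n=\{B_n\}_n$ a neighbourhood base at $x$. Second, $X$ is submetacompact (every open cover admits a sequence of open refinements $\{\mathcal V_n\}$ such that each point lies, for some $n$, in only finitely many members of $\mathcal V_n$ that cover it): taking the quasi-development $\{\mathcal P_n\}_{n\in\omega}$ of Theorem 3.5 with $\mathcal P_0=\mathcal I(X)$, the families $\mathcal V_n=\{G\in\mathcal P_n:G\subseteq U\text{ for some }U\in\mathcal U\}$ catch each isolated point at level $0$ in a single member and each non-isolated point, at the level supplied by the quasi-development, in only finitely many members by point-finiteness at non-isolated points. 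The main obstacle is the final synthesis: turning ``base of countable order $+$ submetacompact'' into an actual development is precisely the Worrell--Wicke theorem, whose standard statement assumes regularity. I would therefore need to secure regularity of $X$ — trivial at isolated points, which are clopen, and at non-isolated points obtainable by refining the development at non-isolated points to successive star-refinements so that the stars $\mbox{st}(x,\mathcal P_n)$ form a base of closed neighbourhoods — or else to appeal to a form of the characterisation valid for Hausdorff spaces; this is the step I expect to require the most care.
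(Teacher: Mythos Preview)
Your treatment of $(2)\Leftrightarrow(3)$ is correct and matches the paper's in spirit. For $(2)\Rightarrow(3)$ the paper says only that developable spaces are perfect; you spell this out. For $(3)\Rightarrow(2)$ the paper uses a cleaner construction than yours: writing $X\setminus I(X)=\bigcap_n G_n$ with $G_n$ open, it simply adjoins to the given development at non-isolated points the covers $\mathcal U_n=\{G_n\}\cup\{\{x\}:x\in X\setminus G_n\}$; the combined sequence is already a development. Your double-indexed family $\mathcal G_{n,m}$ works too, but the intersection with $U_m$ is unnecessary.

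For $(1)\Rightarrow(2)$ you take a genuinely different route from the paper, and here there is a real obstacle. The paper does \emph{not} prove $(1)\Rightarrow(2)$ directly at all: it proves $(1)\Rightarrow(3)$ in one line by citing Balogh--Burke \cite{BB}, who show that in any space with a sharp base the set of isolated points is $F_\sigma$; then $(3)\Rightarrow(2)$ finishes. Your plan instead argues that a sharp base is a base of countable order, that $X$ is submetacompact, and then invokes the Worrell--Wicke theorem. The first two steps are fine, but the last is exactly the difficulty you flag: the standard statement of Worrell--Wicke assumes regularity, the paper works only in $T_2$, and your proposed remedy---refining the development at non-isolated points to successive star-refinements---essentially begs the question, since a development (at non-isolated points or otherwise) of a $T_2$ space need not admit star-refinements unless the space is already regular there. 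So this step is not merely ``the one requiring the most care''; as written it is a gap, and closing it would amount to reproving a version of Worrell--Wicke without regularity. The paper's citation to \cite{BB} bypasses the whole issue.
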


\begin{proof}
$(1)\Rightarrow (3)$ is proved in \cite[Theorem 3.1]{BB} for any
space $X$.\ $(2)\Rightarrow (3)$ is obvious because each open subset
of a developable space is an $F_{\sigma}$-set.

$(3)\Rightarrow (2)$. Let $\{\mathcal{B}_{n}\}$ be a point-finite
development at non-isolated points for $X$ by Theorem 3.1.\ Since
$I(X)$ is an $F_{\sigma}$-set, there exists a sequence $\{G_{n}\}$
of open subsets of $X$ such that $X-I(X)=\bigcap_{n\in
\mathbb{N}}G_{n}$. For each $n\in \mathbb{N}$, let
$$\mathcal{U}_{n}=\{G_{n}\}\cup \{\{x\};x\in X-G_{n}\}.$$
Then $\{\mathcal{B}_{n}, \mathcal{U}_{n}\}$ is a development for
$X$. Hence $X$ is a developable space.
\end{proof}

The following corollary is hold by Lemma 2.4.

\begin{corollary}
A space $X$ is an open compact image of a metric space if and only
if $X$ is a perfect, metacompact space, which is an open
boundary-compact image of a metric space.
\end{corollary}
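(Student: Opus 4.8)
The plan is to prove the two implications separately, using Lemma 2.4 to translate between the map-theoretic condition ``open compact image of a metric space'' and the internal condition ``metacompact and developable.''

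For the forward direction, I would assume $X$ is an open compact image of a metric space. By Lemma 2.4 ((1)$\Rightarrow$(5)), $X$ is a metacompact and developable space. Since every open subset of a developable space is an $F_{\sigma}$-set, $X$ is perfect. Finally, an open compact map is in particular an open boundary-compact map: for each $y$ the boundary $\partial f^{-1}(y)$ is closed and contained in the compact fibre $f^{-1}(y)$, hence is itself compact. This establishes all three clauses on the right-hand side.

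For the converse, I would start from a perfect, metacompact space $X$ that is an open boundary-compact image of a metric space. By Theorem 3.1, $X$ has an uniform base at non-isolated points, so Corollary 3.6 is applicable to $X$. The key observation is that $I(X)$ is always an open subset of $X$, being a union of singletons $\{x\}$ which are open by the very definition of an isolated point; since $X$ is perfect, this open set $I(X)$ is an $F_{\sigma}$-set. Thus condition (3) of Corollary 3.6 holds, and Corollary 3.6 then yields that $X$ is a developable space. Combining developability with the assumed metacompactness, Lemma 2.4 ((5)$\Rightarrow$(1)) gives that $X$ is an open compact image of a metric space.

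The argument is essentially a bookkeeping exercise that assembles the previously established results, and I do not expect any genuinely hard step. The only point requiring a moment's thought is in the converse, where one must notice that perfectness is precisely what promotes the (automatically open) isolated-point set $I(X)$ to an $F_{\sigma}$-set, which is exactly the hypothesis needed to invoke Corollary 3.6.
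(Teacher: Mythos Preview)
Your argument is correct and is exactly the natural unpacking of what the paper compresses into the single phrase ``hold by Lemma 2.4.''  The forward direction is routine, and for the converse you correctly observe that perfectness makes the open set $I(X)$ an $F_\sigma$-set, which together with the uniform base at non-isolated points (via Theorem 3.1) yields developability, after which Lemma 2.4 closes the loop.

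One small correction: in your converse argument you twice write ``Corollary 3.6'' where you clearly mean \emph{Corollary 3.5} (the result whose condition (3) is ``$I(X)$ is an $F_\sigma$-set'' and whose condition (2) is developability).  Corollary 3.6 is the very statement you are proving, so invoking it would be circular; fix the label and the proof reads cleanly.
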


By the corollary, some metrizable theorems on spaces with an uniform
base at non-isolated points can be obtained.\ For example, let $X$
be a space with an uniform base at non-isolated points, then $X$ is
metrizable if and only if it is a perfect, collectionwise normal
space.

Now, a special space with an uniform base at non-isolated points is
discussed.\ Let $(X, \tau)$ be a space and $A\subset X$.\ $X$ is
said to be discretizable by $A$ if $X$ is endowed with the topology
generated by $\tau\cup\{\{x\}:x\in A\}$ as a base for $X$ \cite{LN}.
Denote the discretizable space of $X$ by $X_A$.

It is obvious that the topology of a space $X$ is coarser than the
discretizable topology of $X_{A}$. If $X$ has an uniform base, then
$X_{A}$ not only has a $G_{\delta}$-diagonal and an uniform base at
non-isolated points, but also has a $\sigma$-point finite base. In
\cite[Theorem 3.1]{GZ} has shown that a space is a discretization of
a metric space if and only if it is a proto-metrizable space having
a $G_{\delta}$-diagonal.

\begin{theorem}
Each discretizable space of a space having an uniform base is an
open compact and at most boundary-one image of a space having an
uniform base.
\end{theorem}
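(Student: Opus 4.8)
The plan is to realize $X_A$ through a Ponomarev-type construction adapted to the discretization: use Lemma 2.4 to pass to a metric model of $X$, then correct the fibres so that the induced map onto $X_A$ becomes open, compact and at most boundary-one, while checking that the modified domain still has a uniform base. Throughout write $X$ for the space with the uniform base and $S=X\setminus(I(X)\cup A)$ for the set of points that stay non-isolated in $X_A$; note that for $s\in S$ the neighbourhoods of $s$ in $X_A$ coincide with those in $X$, while each $a\in I(X)\cup A$ is isolated in $X_A$ and $\{a\}$ is clopen there.

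First I would fix the data. Since $X$ has a uniform base, Lemma 2.4 gives a point-finite development $\{\mathcal{G}_n\}$ of $X$, and I would take $\phi\colon M\to X$ to be the associated Ponomarev system (built as in the proof of Theorem 3.1): then $M$ is metric, $\phi$ is open and compact, and each fibre has the product form $\phi^{-1}(x)=\prod_i\Gamma_i(x)$ with $\Gamma_i(x)=\{\lambda\in\Lambda_i:x\in G_\lambda\}$ finite, hence compact. The shape of any admissible map is now forced: if $g\colon Z\to X_A$ is continuous, open, compact and at most boundary-one, then over every isolated $a$ the fibre $g^{-1}(a)$ is clopen (so has empty boundary) and compact, while over every $s\in S$ the fibre $g^{-1}(s)$ must be a single point. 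This dictates the construction of $Z$.

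Next I would build $Z$ and $g$. I would first refine $M$ to a space $M^{*}$ by declaring each compact fibre $\phi^{-1}(a)$, $a\in A$, to be clopen; this is the minimal change that makes $\phi\colon M^{*}\to X_A$ continuous, and one checks at once that $\phi\colon M^{*}\to X_A$ is still open and now has all fibres compact. To enforce the at most boundary-one condition I would then collapse each fibre $\phi^{-1}(s)$, $s\in S$, to a single point $z_s$, leaving the compact clopen fibres over $I(X)\cup A$ untouched; let $q\colon M^{*}\to Z$ be the resulting map and $g\colon Z\to X_A$ the factorisation of $\phi$ through $q$. Then $g$ is continuous because $q$ is a quotient map and $\phi$ is continuous; open because $g\circ q=\phi$ is open and $q$ is onto, so $g(W)=\phi(q^{-1}(W))$ is open for open $W\subseteq Z$; compact because its fibres are single points over $S$ and the compact sets $\phi^{-1}(a)$ over $I(X)\cup A$; and at most boundary-one because the latter fibres are clopen in $Z$. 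All of these verifications are routine.

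The real content, and the step I expect to be the main obstacle, is showing that $Z$ carries a genuine uniform base. By Lemma 2.4 it suffices to exhibit a point-finite development of $Z$, and the natural candidate is assembled from the pullback family $\{g^{-1}(G):G\in\mathcal{G}_n\}$, which governs the collapsed points $z_s$ and the clustering of the fibres, together with a point-finite development of each compact clopen fibre $\phi^{-1}(a)$ coming from its metric topology. The difficulty is reconciling these two pieces into a single development. Whenever $a_k\in A$ converges to $s$ in $X$ one has $a_k\to s$ in $X_A$, and a coordinate computation in the Ponomarev system shows that every neighbourhood of $z_s$ in $Z$ must meet infinitely many of the clopen fibres $\phi^{-1}(a_k)$; yet each such fibre is closed and can only be resolved internally. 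Thus I must arrange that $\mathrm{st}(z_s,\cdot)$ shrinks to a neighbourhood base at $z_s$ \emph{at the same time} as $\mathrm{st}(m,\cdot)$ shrinks inside the fibre for every interior point $m$. This is exactly where the extra structure proved earlier enters: the ortho-base supplied by Theorem 3.3 gives, over each non-isolated point, an essentially unique descending thread, and this lets me stratify the product coordinates $\phi^{-1}(a)=\prod_i\Gamma_i(a)$ level by level in step with $\{\mathcal{G}_n\}$, so that a fibre point's depth is matched to the outer development. Granting that this interleaving can be carried out, the resulting family is point-finite at every point of $Z$ and is a development; hence $Z$ has a uniform base, and $g\colon Z\to X_A$ is the required open, compact and at most boundary-one map.
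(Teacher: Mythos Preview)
Your construction takes a genuinely different route from the paper's, but the decisive step---showing that your quotient $Z$ has a uniform base---is never actually carried out. You yourself write ``granting that this interleaving can be carried out'', and the heuristic you offer does not survive scrutiny: an ortho-base does not give ``an essentially unique descending thread'' at a non-isolated point (that is simply not what the ortho-base axiom says), and it is unclear how to make the proposed mixture of the pullback covers $\{g^{-1}(G):G\in\mathcal G_n\}$ with internal fibre developments point-finite at the collapsed points $z_s$. Indeed, $Z$-neighbourhoods of $z_s$ must be $q$-saturated, so they must swallow entire Ponomarev fibres $\phi^{-1}(s')$ for nearby $s'\in S$ while simultaneously slicing the Cantor-like products $\phi^{-1}(a)=\prod_i\Gamma_i(a)$ over $a\in A$; reconciling these two demands into a single point-finite development is precisely the content of the theorem, and you have not done it. So there is a real gap.

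By contrast, the paper's argument is direct and avoids all of this machinery. It sets $H=(X\times\{0\})\cup(A\times\mathbb N)$, attaching to each $a\in A$ just a convergent sequence $\{a\}\times\mathbb N$ with limit $(a,0)$, and declares the tails $V(a,m)=\{a\}\times(\{0\}\cup\{n\geq m\})$ together with suitable ``spread'' sets $W(J,m)$ and the isolated points of $A\times\mathbb N$ to be a base. A point-finite development $\{\mathcal P_m\}$ of $H$ is then written down explicitly from a point-finite development $\{\mathcal U_m\}$ of $X$, so $H$ has a uniform base by Lemma~2.4, and the first-coordinate projection $\pi_1|_H\colon H\to X_A$ is visibly open, compact, and at most boundary-one. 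The key idea you are missing is that one does not need to pass through the Ponomarev system or collapse anything: replacing each $a\in A$ by a single convergent sequence (rather than a compact product) is enough, and it makes the uniform-base verification a one-line computation.
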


\begin{proof}
Let $X$ be a space having an uniform base. By Lemma 2.4, there is a
point-finite development $\{\mathcal{U}_{m}\}$ for $X$, where
$\mathcal{U}_{m+1}$ refines $\mathcal{U}_{m}$ for each $m\in
\mathbb{N}$. For each $A\subset X$, put

\hspace{0.5cm}$H=(X\times \{0\})\cup (A\times \mathbb{N})$;

\hspace{0.5cm}$V(x,m)=\{x\}\times (\{0\}\cup \{n\in\mathbb N: n\geq
m\}),x\in X, m\in \mathbb{N}$;

\hspace{0.5cm}$W(J,m)=((J\cap (X-A))\times \{0\})$

\hspace{2.5cm}$\cup ((J\cap A)\times \{n\in\mathbb N: n\geq
m\}),J\subset X, m\in \mathbb{N}.$
\\Endow $H$ with a base consisting of the following elements:\

\hspace{1cm}$V(x,m), \forall x\in A, m\in \mathbb{N}$;

\hspace{1cm}$W(J,m), \forall$ open subset $J\subset X, m\in
\mathbb{N}$;

\hspace{1cm}$\{x\}, x\in A\times \mathbb{N}$.
\\Then $H$ is a $T_{2}$-space.

For any $m\in \mathbb{N}$, let

\hspace{1cm}$\mathcal{P}_{m}=\{V(x,m):x\in A\}\cup\{W(U,m):U\in
\mathcal{U}_{m}\}$

\hspace{3.5cm}$\cup\{\{h\}:h\in A\times \{1,2,\cdots,m-1\}\}.$
\\Then $\{\mathcal{P}_{m}\}_{m\geq 2}$ is a point-finite
development for $H$. Hence $H$ has an uniform base.

Let $\pi_{1}|_{H}:H\rightarrow X_A$ be the projective map. It is
easy to see that $\pi_{1}|_{H}$ is an open compact and at most
boundary-one map.
\end{proof}

Hence, each discretizable space of a space having an uniform base is
in MOBI \cite{Be}.

C. Liu \cite{LC} gave some characterizations of quotient (resp.
pseudo-open) boundary-compact images of metric spaces. The following
are further results.

\begin{theorem}
The following are equivalent for a space $X$:
\begin{enumerate}
\item $X$ is first-countable;

\item $X$ is an image of a metric space under a pseudo-open,\
at most boundary-one (resp. boundary-compact) map;

\item $X$ is an image of a metric space under a bi-quotient,\
at most boundary-one (resp. boundary-compact) map.
\end{enumerate}
\end{theorem}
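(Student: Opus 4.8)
The plan is to run the cycle $(3)\Rightarrow(2)\Rightarrow(1)$ and $(1)\Rightarrow(3)$, treating the ``at most boundary-one'' and ``boundary-compact'' versions simultaneously (note that a set with at most one point is compact, so the boundary-one hypotheses are special cases of the boundary-compact ones, and conversely the map I build below will have one-point fibers-boundaries). The implication $(3)\Rightarrow(2)$ is immediate from the hierarchy \emph{open $\Rightarrow$ bi-quotient $\Rightarrow \cdots \Rightarrow$ pseudo-open} recorded after Definition~1.3: a bi-quotient, at most boundary-one (resp.\ boundary-compact) map is in particular a pseudo-open map of the same type. The real content is $(2)\Rightarrow(1)$ and the construction $(1)\Rightarrow(3)$, and Lemma~2.5 is exactly the tool that lets me build only a \emph{pseudo-open} map in the last step and then upgrade it to bi-quotient for free.

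For $(2)\Rightarrow(1)$, let $f\colon M\to X$ be pseudo-open with $M$ metric and $K:=\partial f^{-1}(y)$ compact for a given $y\in X$. Since $M$ is metric, the compact set $K$ has a countable neighborhood base $\{U_n\}_{n\in\mathbb N}$ in $M$ (e.g. $U_n=\{p:d(p,K)<1/n\}$). Writing $f^{-1}(y)\subseteq \mbox{Int}\,f^{-1}(y)\cup K$, put $V_n=\mbox{Int}\,f^{-1}(y)\cup U_n$, an open set containing $f^{-1}(y)$; by pseudo-openness $y\in\mbox{Int}\,f(V_n)=:W_n$. I claim $\{W_n\}$ is a neighborhood base at $y$: given open $O\ni y$, the set $f^{-1}(O)$ is open and contains $f^{-1}(y)\supseteq K$, so $U_n\subseteq f^{-1}(O)$ for some $n$; since also $\mbox{Int}\,f^{-1}(y)\subseteq f^{-1}(O)$, we get $V_n\subseteq f^{-1}(O)$ and hence $W_n\subseteq f(V_n)\subseteq O$. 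Thus $X$ is first-countable, and this covers both versions at once since a boundary of at most one point is compact.

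For $(1)\Rightarrow(3)$, the plan is to produce a \emph{pseudo-open}, at most boundary-one (hence boundary-compact) map from a metric space, after which Lemma~2.5 makes it bi-quotient and delivers $(3)$. Fix for each $x\in X$ a decreasing open neighborhood base $\{B_n(x)\}_{n\in\mathbb N}$ and set $\mathcal P_n=\{B_n(x):x\in X\}\cup\mathcal I(X)$. Running the Ponomarev system $(f_0,M_0,X,\{\mathcal P_n\})$ exactly as in the proof of Theorem~3.1 gives a metric space $M_0\subseteq\prod_n\Lambda_n$ (metrizable for free, being a subspace of a countable product of discrete spaces) and, by the same prefix-extension argument that uses the \emph{decreasing} bases, the identity $f_0(B(\alpha_1,\dots,\alpha_n))=\bigcap_{i\le n}P_{\alpha_i}$; hence $f_0$ is an open, onto map. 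The obstruction is that finite intersections of the open $P_{\alpha_i}$ never collapse to a single non-isolated point, so for non-isolated $x$ one has $\partial f_0^{-1}(x)=f_0^{-1}(x)$, which is far too large. I therefore pass to a carefully chosen subspace $M\subseteq M_0$. Over each non-isolated $x$ I retain one \emph{boundary thread} $b_x$ — a non-isolated limit thread whose length-$n$ prefix-neighborhoods still map onto a neighborhood of $x$ (so $b_x$ remains the topology-carrying, and the unique non-interior, point of its fiber) — together with a sequence of threads that I \emph{isolate} in $M$ (by deleting the threads sharing their prefixes) and arrange to converge to $b_x$. Then each fiber is a convergent sequence: compact, with interior consisting of the isolated threads and $\partial f^{-1}(x)=\{b_x\}$ a single point. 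This yields an onto, pseudo-open, at most boundary-one and boundary-compact map, and Lemma~2.5 promotes it to a bi-quotient map, giving $(3)$ in both forms.

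The main obstacle is precisely this simultaneous thread selection for $M$: one must delete enough threads from $M_0$ to turn the chosen interior threads into genuine isolated points (this is unavoidable, since no system built purely from open covers can produce a one-point boundary over a non-isolated point), yet retain, for every $x$, a thread over each point of each $B_n(x)$ extending the prefix of $b_x$ so that $f$ stays onto and the prefix-neighborhoods of $b_x$ still cover a neighborhood of $x$, which is what pseudo-openness at $x$ requires. Verifying that these deletions can be made consistently across all points $x$ simultaneously — while keeping the convergent-sequence structure of each fiber — is the delicate combinatorial heart of the argument; metrizability of $M$ and continuity of $f$ come for free from the product structure and the neighborhood-base definition of the threads.
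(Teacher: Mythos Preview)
Your cycle $(3)\Rightarrow(2)\Rightarrow(1)$ is fine: the first step is the trivial implication, and your argument for $(2)\Rightarrow(1)$ using a countable neighborhood base of the compact set $\partial f^{-1}(y)$ in $M$ is correct (the case $\partial f^{-1}(y)=\emptyset$ gives $f^{-1}(y)$ open, hence $y$ isolated, which you should note but is harmless).

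The gap is in $(1)\Rightarrow(3)$. You set up the Ponomarev system, observe correctly that the raw fibers are far too large, and then propose to pass to a subspace $M\subset M_0$ by a simultaneous thread-deletion process. But you do not carry this out; you write that ``verifying that these deletions can be made consistently across all points $x$ simultaneously \dots\ is the delicate combinatorial heart of the argument'' and stop. That is precisely the step that needs proof, and it is not clear it can be done: deleting all threads sharing a prefix with a designated ``interior'' thread over $x$ removes threads from many other fibers and may destroy both surjectivity and the pseudo-open property at other points. As written this is a plan, not a proof.

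The paper avoids the issue entirely: it cites Liu \cite{LC} for $(1)\Leftrightarrow(2)$ and then invokes Lemma~2.5 for $(2)\Leftrightarrow(3)$. If you want a self-contained construction, there is a far simpler one than the Ponomarev route. For each $x\in X$ fix a decreasing base $\{B_n(x)\}_n$ and form $M_x=\{\infty_x\}\cup\bigsqcup_{n\in\mathbb N}B_n(x)_d$, where each $B_n(x)_d$ is a discrete copy of $B_n(x)$ and basic neighborhoods of $\infty_x$ are $\{\infty_x\}\cup\bigsqcup_{n\ge m}B_n(x)_d$; this is metrizable (a single non-isolated point with a countable base). Put $M=\bigoplus_{x\in X}M_x$ and define $f$ by $f(\infty_x)=x$ and $f(y)=y$ for $y$ in any $B_n(x)_d$. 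Then $f$ is continuous and onto; every point of $f^{-1}(y)$ other than $\infty_y$ is isolated in $M$, so $\partial f^{-1}(y)\subset\{\infty_y\}$; and if $f^{-1}(y)\subset U$ then some basic neighborhood of $\infty_y$ lies in $U$ and its image is $B_m(y)$, giving pseudo-openness. Now Lemma~2.5 upgrades $f$ to bi-quotient, exactly as you intended.
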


\begin{proof}
$(1)\Leftrightarrow (2)$ was proved in \cite[Corollary 2.1]{LC}, and
$(2)\Leftrightarrow (3)$ is true by Lemma 2.5.
\end{proof}

\begin{theorem}
The following are equivalent for a space $X$:
\begin{enumerate}
\item $X$ has a point-countable base;

\item $X$ is a countably bi-quotient,\ $s$-image of a metric space;

\item $X$ is a pseudo-open,\ boundary-compact and $s$-image of a metric
space;

\item $X$ is a bi-quotient,\ at most boundary-one and countable-to-one
image of a metric space.
\end{enumerate}
\end{theorem}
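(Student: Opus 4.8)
The plan is to establish the cycle of implications $(1)\Rightarrow(2)\Rightarrow(3)\Rightarrow(4)\Rightarrow(1)$, exploiting the Ponomarev-type construction from Theorem 3.1 and the map hierarchy recorded after Definition 1.3. The easy structural facts are that open $\Rightarrow$ bi-quotient $\Rightarrow$ countably bi-quotient $\Rightarrow$ pseudo-open, so several of these arrows will follow essentially for free once the right domain map is in hand.

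First I would prove $(1)\Rightarrow(4)$ by building a Ponomarev system. Assume $X$ has a point-countable base $\mathcal P$, and write $\mathcal P=\bigcup_{n}\mathcal P_n$ where each $\mathcal P_n$ enumerates $\mathcal P$ (a point-countable base can be arranged into a sequence of open covers refining suitably). Set $M=\{\alpha=(\alpha_n)\in\prod_n\Lambda_n:\{P_{\alpha_n}\}\text{ is a neighborhood base at some }x_\alpha\}$ with the subspace metric topology, and define $f(\alpha)=x_\alpha=\bigcap_n P_{\alpha_n}$ exactly as in the proof of Theorem 3.1. The same computation $f(B(\alpha_1,\dots,\alpha_n))=\bigcap_{i\le n}P_{\alpha_i}$ shows $f$ is open, hence bi-quotient. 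Since $\mathcal P$ is point-countable, for each $x$ the fiber $f^{-1}(x)$ injects into $\prod_n\{\alpha\in\Lambda_n:x\in P_{\alpha}\}$, a countable product of countable sets, so $f$ is countable-to-one; and at most boundary-one should follow by arguing that the interior of each fiber is captured by a single coordinate while the boundary collapses to one point, using that a point-countable base yields a first-countable-like neighborhood selection on the boundary.

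Next, $(4)\Rightarrow(3)$ is immediate from the map hierarchy: a bi-quotient map is pseudo-open, at-most-boundary-one implies boundary-compact, and countable-to-one implies the $s$-map property since countable fibers in a metric space are separable. The step $(3)\Rightarrow(2)$ is also a direct weakening, as pseudo-open boundary-compact maps are bi-quotient by Lemma 2.5, whence countably bi-quotient, and the $s$-map property is retained. Thus the main remaining arrow is $(2)\Rightarrow(1)$: a countably bi-quotient $s$-image of a metric space has a point-countable base. I would argue this by taking a $\sigma$-discrete (hence point-countable) base $\mathcal B$ of the metric domain $M$, forming the images, and using the countable bi-quotient condition together with separability of fibers to extract, for each point and each open neighborhood, a finite subfamily whose image is a neighborhood; the separability ($s$-map) keeps the resulting family point-countable downstairs.

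The hard part will be the $(2)\Rightarrow(1)$ direction, specifically verifying that the candidate family of images is genuinely point-countable rather than merely a network. The countable bi-quotient hypothesis must be deployed at each point to convert countable open covers of the separable fiber into honest neighborhoods, and the delicate bookkeeping is ensuring that the $s$-map condition bounds how many basic images can meet a given point of $X$. I expect this to require a careful diagonal argument against a hypothetical point lying in uncountably many members of the constructed family, contradicting separability of the corresponding fiber; once that is set up, the equivalence closes the cycle.
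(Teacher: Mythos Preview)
Your cycle $(1)\Rightarrow(4)\Rightarrow(3)\Rightarrow(2)\Rightarrow(1)$ (note: not the direction you wrote in your opening sentence) matches the paper's route for the easy arrows $(4)\Rightarrow(3)$ and $(3)\Rightarrow(2)$, and for $(2)\Rightarrow(1)$ the paper simply cites Michael \cite{Mi} rather than reproving it. Your sketch of that last step is plausible in spirit but you should be aware it is a nontrivial classical theorem, not something to improvise.

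The genuine gap is in your $(1)\Rightarrow(4)$. You claim that because $\mathcal P$ is point-countable, the fiber $f^{-1}(x)$ embeds in $\prod_{n}\{\alpha\in\Lambda_n:x\in P_\alpha\}$, ``a countable product of countable sets, so $f$ is countable-to-one.'' But a countable product of countable sets has cardinality up to $\mathfrak c$, not $\aleph_0$; think of $\mathbb N^{\mathbb N}$. Concretely, if you take $\mathcal P_n=\mathcal P$ for every $n$ and $x$ is non-isolated, then any permutation or repetition of a fixed countable local base at $x$ gives a distinct point of $f^{-1}(x)$, so the fiber is typically of size continuum. Thus the naive Ponomarev map is \emph{not} countable-to-one, and your construction does not produce the map claimed in (4). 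The ``at most boundary-one'' assertion is also unsupported: in the standard Ponomarev system over a non-isolated point the whole fiber is its own boundary, and nothing in your setup collapses it to a single point.

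The paper does not attempt a direct construction here; it invokes Liu's theorem \cite{LC} (point-countable base $\Leftrightarrow$ pseudo-open, at most boundary-one, countable-to-one image of a metric space) and then upgrades pseudo-open to bi-quotient via Lemma~2.5. Liu's construction is genuinely more delicate than the Ponomarev system of Theorem~3.1---one has to control fibers pointwise rather than just bound them by a product---so if you want a self-contained argument you would need to reproduce that, not the development-based map.
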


\begin{proof}
C. Liu proved that a space has a point-countable base if and only if
it is a pseudo-open,\ at most boundary-one and countable-to-one
image of a metric space in \cite{LC}. Thus $(1)\Leftrightarrow (4)$
by Lemma 2.5.\ $(4)\Rightarrow (3)$ is trivial. $(3)\Rightarrow (2)$
by Lemma 2.5, and $(2)\Leftrightarrow (1)$ by \cite{Mi}.
\end{proof}

\vskip 0.5cm
\section{Examples}
In this section some examples are given, which show certain
relations among boundary-compact images of metric spaces and
generalized metric spaces.

\begin{example}
Let $X$ be the closed unit interval $\mathbb I=[0, 1]$ and $B$ a
Bernstein subset of $X$.\ In other words, $B$ is an uncountable set
which contains no uncountable closed subset of $X$.\ The
discretizable space $X_{B}$ is called {\it Michael line} \cite{ME}.\

Let $X^*$ be a copy of $X_{B}$, and $f:X_B\to X^*$ a homeomorphism.
Put $Z=X_{B}\bigoplus X^*$, and let $Y$ a quotient space obtained
from $Z$ by identifying $\{x, f(x)\}$ to a point for each $x\in
X_B\setminus B$. Then

(1)\ $X_{B}$ is a discretizable space of the metric space $\mathbb
I$, so it is a proto-metrizable space, and an open compact, at most
boundary-one image of a space with an uniform base by Theorem 3.7.

(2)\ $X_{B}$ is not a BCO space, hence it is not an open compact
image of  a metric space;

(3)\ $Y$ is an open boundary-compact, $s$-image of a metric space;

(4)\ $Y$ has not any $G_{\delta}$-diagonal by \cite[Example 1]{Po}.
\end{example}

It is obvious that $X_B$ is a paracompact space which is a
discretizable space of the metric space $\mathbb I$. If $X_B$ is
BCO, it is a developable space, then $B$ is an $F_{\sigma}$-set in
$X_{B}$, a contradiction. Thus $X_B$ is not BCO.

It is easy to check that $Y$ has a point-countable base which is
uniform at non-isolated points. Hence $Y$ is an open
boundary-compact, $s$-image of a metric space by Corollary 3.2.

\begin{example}
Let $\psi(D)$ be {\it Isbell-Mr\'{o}wka} space \cite{MS}, here
$|D|\geq\aleph_0$. Then

(1)\ $\psi(D)$ is an open,  boundary-compact image of a metric
space;

(2)\ $\psi(D)$ is not a meta-Lindel\"{o}f space;

(3)\ $\psi (D)$ is a developable space if $|D|=\aleph_0$;

(4)\ $\psi(D)$ is not a perfect space if $|D|\geq\bf c$.
\end{example}

A collection $\mathcal C$ of subsets of an infinite set $D$ is said
to be {\it almost disjoint} if $A\cap B$ is finite whenever
$A\not=B\in\mathcal C$. Let $\mathcal A$ be an almost disjoint
collection of countably infinite subsets of $D$ and maximal with
respect to the properties. Then $|\mathcal A|\geq |D|^+$ \cite{Ku}.
Isbell-Mr\'{o}wka space $\psi(D)$ is the set $\mathcal A\cup D$
endowed with a topology as follows: The points of $D$ are isolated.
Basic neighborhoods of a point $A\in\mathcal A$ are the sets of the
form $\{A\}\cup (A-F)$ where $F$ is a finite subset of $D$.

Let $X=\psi(D), \mathcal{A}=\{A_{\alpha}\}_{\alpha\in \Lambda}$ and
each $A_{\alpha}=\{x(\alpha, n):n\in \mathbb{N}\}$.\ For each
$n\in\mathbb N$, put
$$\mathcal{B}_{n}=\{\{A_{\alpha}\}\cup \{x(\alpha,
m):m\geq n\}:\alpha\in \Lambda\}\cup \{\{x\}:x\in D\}.$$

It is easy to see that $\{\mathcal{B}_{n}\}$ is a point-finite
development for $X$.\ Thus $X$ is the open, boundary-compact image
of a metric space by Theorem 3.1. Since an open cover
$\{\{A_\alpha\}\cup D\}_{\alpha\in\Lambda}$ of $X$ has not any
point-countable open refinement, $X$ is not a meta-Lindel\"{o}f
space. Thus $X$ is not an open $s$-image of a metric space, and $X$
is not a discretizable space of a space with an uniform base by
Theorem 3.7.

If $D$ is countable, it is obvious that $\psi (D)$ is a developable
space. Hence $\psi (D)$ has a $G_{\delta}$-diagonal, but $\psi (D)$
has not any point-countable base because  $\psi(D)$ is not a
meta-Lindel\"{o}f space.

If $|D|\geq\bf c$, $\psi (D)$ is not a developable space \cite{C1},
thus $\psi(D)$ is not perfect by Corollary 3.5.

\begin{example}
There is a space $X$ such that

(1)\ $X$ has a sharp base;

(2)\ $X$ has not an uniform base at non-isolated points;

(3)\ $X$ is an open compact and countable-to-one image of a space
with an uniform base.
\end{example}

A space $X$ having the properties is constructed in \cite[Example
5.1]{AAC}, where it is shown that $X$ has a non-developable space
with a sharp base.\ Since $X$ has not any isolated point,\ it is not
an open,\ boundary-compact image of a metric space and has not an
uniform base at non-isolated points by Theorem 3.1. J. Chaber in
\cite[Example 4.5]{C2} was proved that $X$ is an open compact and
countable-to-one image of a space with an uniform base.

\begin{example}
There is a bi-quotient,\ at most boundary-one image $X$ of a metric
space such that $X$ is neither a pseudo-open $s$-image of a metric
space, nor an open,\ boundary-compact image of a metric space.
\end{example}

Let $X=\mathbb{R}^{2}$ endowed with the butterfly topology\cite{ML}.
It is easy to see that $X$ is a first-countable,\ paracompact space
without any isolated point.\ Since $X$ is a first-countable space,\
then $X$ is a bi-quotient,\ at most boundary-one image of a metric
space by Theorem 3.8.\ Since $X$ has not a point-countable base
\cite[Example 1.8.3]{Ls}, $X$ is not a countably bi-quotient
$s$-image of a metric space by Theorem 3.9. Because each pseudo-open
map from a space onto a first-countable space is countably
bi-quotient \cite{Mi}, $X$ is not a pseudo-open $s$-image of a
metric space.\ If $X$ is an open,\ boundary-compact image of a
metric space,\ $X$ is an open compact image of a metric space for
$X$ does not contain any isolated point.\ So $X$ is a developable
space by Lemma 2.4.\ Thus $X$ is a metric space,\ a contradiction.\

\begin{example}
There a proto-metrizable space without any uniform base at
non-isolated points.
\end{example}

G. Gruenhage in \cite[p. 363]{Gr1} constructed a proto-metrizable
$X$ which is not a $\gamma$-space. Hence $X$ has not any $\sigma$-Q
base by \cite[Proposition 1.7.10]{Ls}, and it has not any uniform
base at non-isolated points by Theorem 3.4.

\begin{example}
There a space such that it is an open compact image of a metric
space, which is not any open, at most boundary-one image of a metric
space.
\end{example}

Y. Tanaka in \cite[Example 3.1]{Ta} constructed a non-regular
$T_{2}$-space $X$ which is an open, at most two-to-one image of a
metric space. Since $X$ has not any isolated point, it is not an
open, at most boundary-one image of a metric space. Otherwise, $X$
is an image of a metric space under an open and bijective map, then
$X$ is homeomorphic to a metric space, a contradiction.

\vskip 0.5cm
\section{Questions}
Some questions are posed in the final.

\begin{question}
Let a space $X$ have a point-countable base. If $X$ has an uniform
base at non-isolated points, is $X$ an open, boundary-compact,
$s$-image of a metric space?
\end{question}

\begin{question}
Is an open and boundary-compact $s$-image of a metric space an open,
boundary-compact and countable-to-one image of a metric space?
\end{question}

\begin{question}
How to character a discretizable space of a space with an uniform
base by a certain image of a metric space? For example, whether the
open compact and at most boundary-one image of a space with an
uniform base is a discretizable space of a space with an uniform
base?
\end{question}

\begin{question}
How to character a space which is an open, at most boundary-one,
$s$-image of a metric space?
\end{question}

{\bf Acknowledgements}. The authors would like to thank the referee
for his valuable suggestions.

\end{document}